\newcommand{\R}{{\mathbb R}}
\newcommand{\N}{{\mathbb N}}
\newcommand{\C}{{\mathbb C}}
\newcommand{\be}{\begin{eqnarray}}
\newcommand{\ben}{\begin{eqnarray*}}
\newcommand{\en}{\end{eqnarray}}
\newcommand{\enn}{\end{eqnarray*}}
\newtheorem{theo}{Theorem}[section]
\newtheorem{coro}[theo]{Corollary}
\newtheorem{rema}[theo]{Remark}
\definecolor{rot}{rgb}{0.000, 0.000, 0.000}
\newcommand{\tcr}{\textcolor{rot}}
\definecolor{green}{rgb}{0.000, 0.000, 0.000}
\newcommand{\tcg}{\textcolor{green}}
\definecolor{blue}{rgb}{0.000, 0.000, 0.000}
\newcommand{\tcb}{\textcolor{blue}}
\definecolor{rot1}{rgb}{0.000, 0.000, 0.000}
\newcommand{\rot}{\textcolor{rot1}}
\begin{document}
\renewcommand{\theequation}{\arabic{section}.\arabic{equation}}
\begin{titlepage}
\title{\bf \tcr{Uniqueness to some inverse source problems for the wave equation in unbounded domains}}

\author{Guanghui Hu\thanks{Beijing Computational Science Research Center, Beijing 100193, China ({\sf hu@csrc.ac.cn}).} \qquad
Yavar Kian\thanks{Aix Marseille Univ, Universit\'e de Toulon, CNRS, CPT, Marseille,
France. ({\sf yavar.kian@univ-amu.fr}).}\quad \quad
 Yue Zhao\thanks{ School of Mathematics and Statistics, Central China Normal University, Wuhan 430079, China ({\sf zhaoyueccnu@163.com}).}
}
\date{}
\end{titlepage}
\maketitle


\begin{abstract}
This paper is concerned with inverse acoustic source  problems in an unbounded domain with dynamical boundary surface data of Dirichlet kind. \tcr{The measurement data are taken at a surface far away from the source support.
We prove uniqueness in recovering source terms of the form $f(x)g(t)$ and $f(x_1,x_2,t) h(x_3)$, where $g(t)$ and $h(x_3)$ are given and $x=(x_1, x_2, x_3)$ is the spatial variable in three dimensions. Without these a priori information, we prove that
the boundary data of a family of solutions can be used to recover general source terms depending on both time and spatial variables. For moving point sources radiating periodic signals,
 the data recorded at four receivers are prove sufficient to uniquely recover the orbit function. Simultaneous determination of embedded obstacles and source terms was verified in an inhomogeneous background medium \tcr{using the observation data of infinite time period}. Our approach depends heavily on the Laplace transform.}

\vspace{.2in} {\bf Keywords:} Inverse source problems, Laplace transform, moving point source, uniqueness.

\end{abstract}

\section{Introduction}
Inverse source problems have significant applications in many scientific areas such as antenna synthesis and design, biomedical engineering, medical imaging and optical tomography. For a mathematical overview of various inverse source problems we refer to \cite{Isakov89} by Isakov where uniqueness and stability are discussed. An application in the fields of inverse diffraction and near-field holography was presented in \cite[Chapter 2.2.5]{GC}.

The approaches of applying Carleman estimate \cite{K1992} and unique continuation \cite{Tataru} for hyperbolic equations have been widely used in the literature, giving rise to uniqueness and stability results for inverse coefficient and inverse source problems with the dynamical data over a finite time; we refer to  \cite{ACY-EJAM04, CM2006,IY-01-IP, JLY2017, Ya1995, Ya1999} for an incomplete list. \tcr{Recently,
an inverse source problem for doubly hyperbolic equations arising from the nucleation rate reconstruction in the three-dimensional time cone model was analyzed in \cite{LJY-15-SIAM}. A Lipschitz stability result was proved for recovering the spatial component of the source term using interior data and an iterative thresholding algorithm (see also \cite{JLY-2017} with the final observation data) was tested.
However, most of the above mentioned works dealt with recovery of time independent source terms. We refer to \cite{DOT,RS,BHKY,HK} where specific time-dependent source terms for hyperbolic equations were considered and to \cite{kian2016} for the recovery of some class of space-time-dependent source terms in the parabolic equation on a wave guide. }
In the time-harmonic case, inverse source problems with multi-frequency data have been extensively investigated. The increasing stability analysis in recovering spatial-dependent source terms has been carried out from both theoretical and numerical points of view (see e.g., \cite{BLLT2015, BLRX, BLT-JDE, BLZ2017, CIL2016, EV2009, LY, LZ-AA}).

In the time domain, it is very natural to transform the wave scattering problem governed by hyperbolic equations into elliptic inverse problems in the Fourier or Laplace domain with \emph{multi-frequency} data; see e.g. \cite{Isakov08} for determining sound-hard and impedance obstacles in a homogeneous background medium.
In \cite{CIL2016}, the time-domain analysis helps for deriving an increasing stability to time-harmonic inverse source problems via Fourier transform.
The same idea was used in \cite{BHKY,HLLZ,HKLZ} for recovering  spatial-dependent sources as well as moving source profiles and orbits in elastodynamics and electromagnetism. The aim of this paper is to analyze the acoustic counterpart with new uniqueness results. \textcolor{rot}{Specially, this paper concerns the following four inverse problems with a single boundary surface data}:

\begin{enumerate}

\item \textcolor{rot}{Simultaneous determination of sound-soft obstacles and separable source terms in an inhomogeneous medium (Subsection \ref{sub1}).}

\item \textcolor{rot}{Simultaneous determination of sound-soft obstacles and general time-dependent source terms from a family of solutions (Subsection \ref{sub2}).}

\item \textcolor{rot}{Inverse moving point source problems from the data of four receivers (Subsection \ref{sub3.1}) .}

\item \textcolor{rot}{Determination of source terms which are independent of one spatial variable (Subsection \ref{sub3.2}) .}

\end{enumerate}
\tcr{The Laplace (Fourier) transform will be used to handle the above inverse problems 1,  2 and 4.}
We highlight the novelty of this paper as follows.
First, we verify the unique determination of both embedded obstacles and spatial-dependent source terms in an inhomogeneous medium. 
Although the acoustically sound-soft obstacles are considered within this paper, the proof carries over to other reflecting boundary conditions for impenetrable scatterers in acoustics and elastodynamics (see Remark \ref{rema1}). Second, the data of a family of solutions are used to recover a general source which depends on both time and space variables; Thirdly, the data of a finite number of receivers are proven sufficient to determine the orbit of a moving point source which radiates periodic temporal signals. This differs from inverse moving source problems of \cite{HKLZ}, where compactly supported temporal functions were considered and Huygens' principle was applied.
Our uniqueness proof seems new and leads straightforwardly to a numerical algorithm. Finally, the argument for recovering source terms independent of one spatial variable has simplified the corresponding proof in linear elasticity contained in \cite{HK}. \tcr{Note that, although the measurement data are taken on a spherical surface, our results carry over to other non-spherical surfaces straightforwardly. In particular,  Theorem \ref{th1}, \ref{th3} and \ref{th4} remain valid if the data are observed on any subset of a closed analytical surface with positive Lebesgue measure.}

The remaining part of this \tcr{paper} is divided into three sections. In the subsequent Section \ref{ip1}, we consider simultaneous determination of sound-soft obstacles and source terms via \tcr{the} Laplace transform. Section \ref{ip2}
 is devoted to the unique determination of time-dependent source terms  in a homogeneous background medium, including inverse moving source problems.
 Some remarks and open questions will be concluded in Section \ref{conclude}.

\section{Simultaneous determination of sound-soft obstacles and source terms}\label{ip1}

Consider the time-dependent acoustic wave propagation in an inhomogeneous background medium with an acoustic source outside a sound-soft obstacle modelled by (see \textcolor{rot}{Figure 1})
\begin{align}\label{eqn}
\frac{1}{c^2(x)}\textcolor{rot}{\partial^2_t}u(x,t) - \triangle u(x,t) = F(x,t), \quad x\in\mathbb R^3\backslash \bar{D},\quad t>0,
\end{align}
where $c(x)$ is the wave speed, $u(x,t)$ denotes the wave field, $D\subset \mathbb R^3$ represents the region of the sound-soft obstacle and $F(x,t)$ is the acoustic source \tcr{term}. Together with the above governing equation, we impose the homogeneous initial conditions
\begin{align}\label{ic}
u(x,0) = 0, \quad \partial_t u(x,0) = 0, \quad x \in \mathbb R^3\backslash \bar{D},
\end{align}
and the Dirichlet boundary condition on $\partial D$:
\begin{align}\label{bdc}
u(x,t) = 0, \quad (x,t)\in\partial D \times \mathbb R^+.
\end{align}

\begin{figure}\label{fig}
\center
\includegraphics[width=0.5\textwidth]{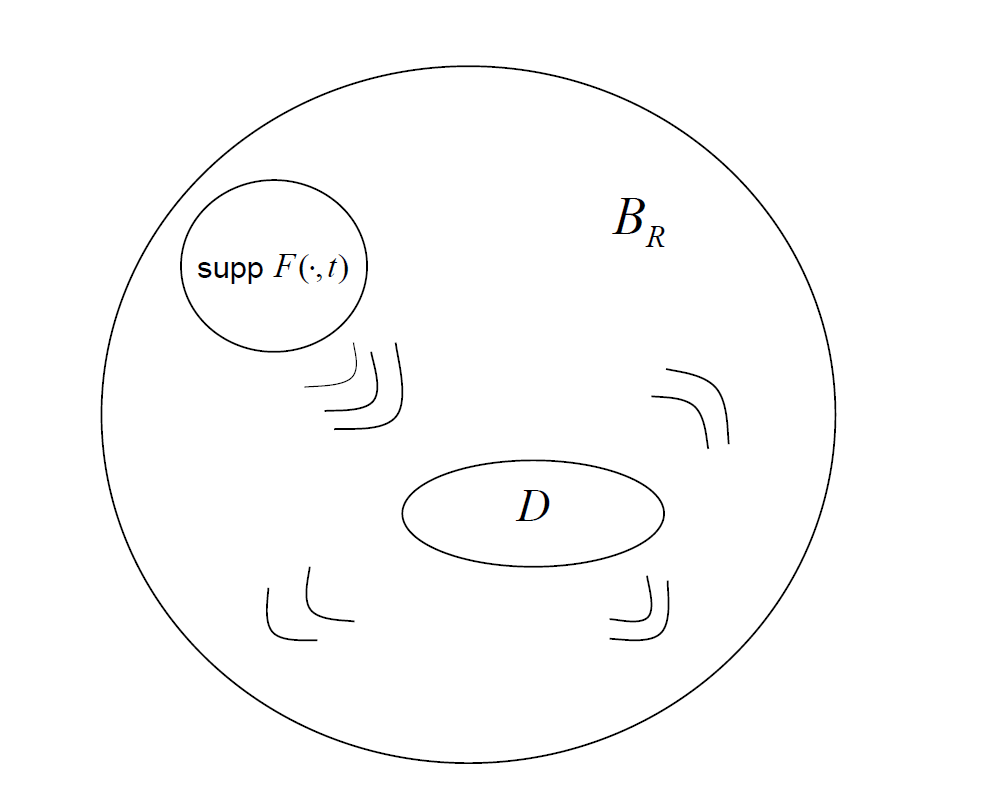}
\caption{Radiation of a source in the exterior of a sound-soft obstacle $D$ \tcg{in two dimensions}. The inverse problem is to determine both the source term \textcolor{rot}{$F = F(x, t)$} and the obstacle \textcolor{rot}{$D$} from the displacement data measured on $\Gamma_R:=\{x\in\mathbb R^3: |x|=R\}\tcr{=\partial B_R}.$}
\end{figure}

Throughout this paper we assume that \tcr{$D\subset B_R$ and that the source term $F(x,t)$ is compactly supported in $(B_R\backslash\overline{D}) \times (0,T_0)$}. Here $B_R:=\{x\in\mathbb R^3: |x|<R\}$ and $R>0$, $T_0>0$ are constants. We denote the boundary of $B_R$ by $\Gamma_R:=\{x\in\mathbb R^3: |x|=R\}$. It is also supposed that \textcolor{blue}{$c\in L^\infty(\mathbb R^3)$ satisfies
\begin{equation}\label{cc}\exists c_0>0,\quad c(x)\geq c_0\end{equation}}
 and $\text{supp}(1-c)\subset B_R$, which means that the \tcr{acoustic} medium outside $B_R$ is homogeneous. We also assume that
 \tcg{$\overline{\text{supp}(F(\cdot, t))} \cap \overline{D} = \emptyset$}
 \tcg{for all $t>0$}
   and that $D$ is a $\mathcal{C}^3$-smooth domain with the connected exterior $\mathbb R^3 \backslash \bar{D}$. Suppose that $F(x,t)\in L^2(0,T_0; L^2(B_R\backslash\overline{D}))$.
Then, the problem \eqref{eqn}-\eqref{bdc} admits a unique solution
\[
u \in \mathcal{C}^1([0, +\infty); L^2(\mathbb R^3 \backslash D))\cap \mathcal{C}([0, +\infty); H^1(\mathbb R^3\backslash D)).
\]
The proof of this result can be carried out using the elliptic regularity properties of the Laplace operator (see \cite{HK, HW, LM1, LM2, mclean}).

The goal of this section is to recover both the source term $F(x,t)$ and the embedded obstacle $D$ from the boundary surface data $\{u(x,t):~ |x|=R, ~t>0\}$ over an infinite time period. It is important to note that  uniqueness in recovering time-dependent source terms is not true in general. A non-uniqueness example can be easily constructed in the absence of the obstacle $D$ (that is, $D =\emptyset$). In fact,
let $\chi\in \mathcal{C}_0^{\infty}(B_R \times (0, T_0))\neq 0$ such that the function
\[
F(x,t):= \frac{1}{c^2}\textcolor{rot}{\partial^2_t}\chi - \triangle \chi,\quad (x,t)\in \R^3\times\R_+
\] does not vanish identically.
Consider the inhomogeneous source problem
\begin{align}\label{ce}
\begin{cases}
\frac{1}{c(x)^2}\textcolor{rot}{\partial^2_t}u(x,t) - \triangle u(x,t) = F(x,t), \quad x \in \mathbb R^3 \times (0, +\infty),\\
u(x,0) = \partial_t u(x,0) = 0, \quad x \in \mathbb R^3.
\end{cases}
\end{align}
Clearly, from the uniqueness
of solutions of \eqref{ce} we conclude that $u = \chi$ is the unique solution. However, we have
\[
u(x, t) = 0, \quad |x| = R,\; t \in (0, +\infty),
\]
due to the fact that $\text{supp}(\chi)\subset B_R \times (0, T_0)$. This means that $F \neq 0$ is a non-radiation source and thus the surface
  data $\{u(x,t):|x|=R, ~t>0\}$ usually do not allow the unique recovery of general source terms $F(x,t)$ satisfying $\text{supp}(F)\subset B_R \times (0, T_0)$. It implies that
there is no hope \tcr{to prove} uniqueness with a single measurement data.
Facing this obstruction, we need to either know a certain \emph{a prior} information of the source (see Subsections \ref{sub1} and \ref{sub3.2}) or make use of extra data (see subsection \ref{sub2}) for \tcr{recovering} both time- and spatial-dependent source terms.

\subsection{Spatial-dependent source terms in an inhomogeneous background medium}\label{sub1}

In this section we consider source terms of the form
\begin{align}\label{fg}
F(x,t)=f(x)\;g(t), \quad x \in \mathbb R^3 \backslash \bar{D}, ~t \in (0, \infty),
\end{align}
where  $f\in L^2(B_R\backslash \overline{D})$ is the spatial-dependent source term to be determined and $g\in L^2(0,T_0)$ is a given temporal function. \tcb{ We fix also $U$ an open and connected set of $\mathbb R^3$ such that $\overline{U}\subset B_R$.}

Below we give a confirmative answer to the uniqueness issue of our inverse problem under proper assumptions on $supp(f_j)$ and $D_j$.
\tcb{\begin{theo}\label{th1}  Let $g\in L^2(0,T_0)$ and let $c\in L^\infty(\mathbb R)$ be such that $(1-c)$ is  supported in $B_R$ and \eqref{cc} is fulfilled. For $j=1,2$, let $D_j$ be an obstacle contained into $U$ and $f_j\in L^2(B_R\backslash \overline{D_j})$ satisfy $\tcg{\overline{supp(f_j)}\subset B_R\backslash\overline{U}}$ and $U\setminus\overline{D_j}$ is connected. Here we assume that $f_1,f_2$ are \tcg{non-uniformly} vanishing.
Denote by $G$ the connected component of $U\backslash\overline{D_1\cup D_2}$ which can be connected to $\R^3\backslash \overline{B_R}$.
 We assume that there exists $\mathcal O$ an open and connected subset of $\mathbb R^3$ such that
\begin{equation}\label{th1a} \mathcal O\cap(\mathbb R^3\backslash B_R)\neq\emptyset,\quad \mathcal O\cap G\neq\emptyset,\quad \mathcal O\cap \textrm{supp}(f_1-f_2)=\emptyset.\end{equation}
Then, for $u_j$ solving \eqref{eqn}-\eqref{bdc} with $F(x,t)=f_j(x)g(t)$ and $D=D_j$, the condition
\begin{equation}\label{th1b} u_1(x,t)=u_2(x,t),\quad x\in \Gamma_R, \, t>0,\end{equation}
implies $D_1=D_2$ and $f_1=f_2$.
\end{theo}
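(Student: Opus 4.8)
The plan is to pass to the Laplace transform in time, turning the hyperbolic problem into a parametrized family of elliptic equations, and then to combine unique continuation with the geometric hypotheses \eqref{th1a}. For $\mathrm{Re}\,s>0$ set $\hat u_j(x,s)=\int_0^\infty e^{-st}u_j(x,t)\,dt$; the regularity $u_j\in\mathcal C^1([0,\infty);L^2)\cap\mathcal C([0,\infty);H^1)$ together with an energy estimate (at most exponential growth in $t$) makes $\hat u_j(\cdot,s)$ well defined and analytic in $s$. Using the homogeneous data \eqref{ic}, transforming \eqref{eqn}--\eqref{bdc} gives
\[
-\Delta \hat u_j+\frac{s^2}{c^2}\,\hat u_j=\hat g(s)\,f_j \quad\text{in } \mathbb R^3\setminus\overline{D_j},\qquad \hat u_j=0 \text{ on }\partial D_j,
\]
with $\hat u_j(\cdot,s)\in H^1$ decaying at infinity, and \eqref{th1b} yields $\hat u_1=\hat u_2$ on $\Gamma_R$ for every such $s$.

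First I would treat the exterior of $B_R$, where $c\equiv1$ and $f_j\equiv0$. There $w:=\hat u_1-\hat u_2$ solves $-\Delta w+s^2 w=0$ with $w=0$ on $\Gamma_R$ and $w\to0$ at infinity; for real $s>0$ the operator $-\Delta+s^2$ is coercive, so $w\equiv0$ in $\mathbb R^3\setminus\overline{B_R}$, whence both the Dirichlet and Neumann traces of $w$ vanish on $\Gamma_R$. Since $\mathrm{supp}(f_j)\subset B_R\setminus\overline U$, both $\hat u_j$ solve the source-free equation on $U\setminus\overline{D_j}$, and $f_1=f_2$ on $\mathcal O$, so $w$ satisfies the homogeneous equation on $\mathcal O\cup G$. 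Using \eqref{th1a} I would continue $w\equiv0$ from the exterior through $\mathcal O$ (which meets $\mathbb R^3\setminus B_R$) into $G$ (which $\mathcal O$ also meets), and, $G$ being connected, conclude by the unique continuation principle that $\hat u_1=\hat u_2$ on $G$.

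To identify the obstacle I argue by contradiction, assuming $D_1\neq D_2$, say $D_2\setminus\overline{D_1}$ has a component adjacent to $G$. There $\hat u_1$ solves the source-free equation, and it vanishes on the whole boundary $\partial(D_2\setminus\overline{D_1})\subset\partial D_1\cup\partial D_2$: on the $\partial D_1$-part by \eqref{bdc}, and on the $\partial D_2$-part because $\hat u_1=\hat u_2=0$ there (using $\hat u_1=\hat u_2$ on $G$ and continuity of $\hat u_1$ across $\partial D_2$, which is free for problem $1$). Coercivity of $-\Delta+s^2c^{-2}$ on $H^1_0(D_2\setminus\overline{D_1})$ forces $\hat u_1\equiv0$ there, and unique continuation in the connected set $U\setminus\overline{D_1}$ gives $\hat u_1\equiv0$ on $U\setminus\overline{D_1}$; by the source-recovery argument below this entails $f_1\equiv0$, contradicting the non-vanishing hypothesis. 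Hence $D_1=D_2=:D$.

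With a common obstacle, $w$ solves $-\Delta w+s^2c^{-2}w=\hat g(s)(f_1-f_2)$ in $B_R\setminus\overline D$ with $w=0$ on $\partial D$ and vanishing Cauchy data on $\Gamma_R$. Testing against any solution $\psi$ of $-\Delta\psi+s^2c^{-2}\psi=0$ with $\psi=0$ on $\partial D$ and integrating by parts, every boundary term drops out, giving $\hat g(s)\int(f_1-f_2)\psi\,dx=0$. I expect \textbf{this last step to be the main obstacle}: for a single $s$ the homogeneous solutions $\psi$ are far from dense in $L^2$, so the non-radiating phenomenon described after \eqref{ce} is not yet excluded, and the separable structure with a \emph{fixed} $g$ must be exploited across the whole range of $s$. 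Choosing $s$ with $\hat g(s)\neq0$ and letting $s$ vary, I would expand $w(\cdot,s)$ in the weighted Dirichlet eigenfunctions of $-\Delta$ on the annulus $B_R\setminus\overline U$, use that the resolvent factors $(\mu_k+s^2)^{-1}$ are linearly independent in $s$, and match coefficients to produce, for each eigenvalue, an eigenfunction with vanishing Cauchy data on $\Gamma_R$; unique continuation kills it, yielding $f_1=f_2$ on the annulus and hence $f_1=f_2$. The same computation supplies the $f_1\equiv0$ contradiction invoked in the previous paragraph.
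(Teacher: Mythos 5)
You follow essentially the same route as the paper: Laplace transform in $t$, vanishing of $w=\hat u_1-\hat u_2$ in the exterior of $B_R$, unique continuation through $\mathcal O$ into $G$, positivity of $-\triangle+s^2c^{-2}$ on a ``gap'' domain to force $D_1=D_2$, and finally a resolvent/eigenfunction expansion in the spectral parameter to recover the source. The step you flag as the main obstacle is resolved in the paper exactly as you guess: one writes $\hat u_1(\cdot,s)=\hat g(s)\sum_l (s^2+\gamma_l)^{-1}\sum_k\langle c^2f_1,\phi_{l,k}\rangle\phi_{l,k}$, notes that the left-hand side vanishes on an open exterior set for $s$ in an interval where $\hat g\neq0$, extends in $z=s^2$ by analyticity, takes the limit $z\to-\gamma_j$ of $(z+\gamma_j)G(x,z)$ to isolate each eigenprojection, and kills it by unique continuation for the eigenvalue equation.

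Two of your specific choices would, however, need the paper's repairs. First, in the obstacle step you take a component $W$ of $D_2\setminus\overline{D_1}$ adjacent to $G$; but $\partial W\cap\partial D_2$ need not lie on $\partial G$, and at such points you cannot assert $\hat u_1=\hat u_2$ (the identity has only been continued into $G$), so the Dirichlet data of $\hat u_1$ on $\partial W$ is not fully controlled. The paper instead works with $D^*=(U\setminus G)\setminus\overline{D_1}$, whose boundary is contained in $\partial D_1\cup(\partial G\cap\partial D_2)$, precisely so that $\hat u_1$ is known to vanish on all of it. Second, you propose expanding $w$ in the Dirichlet eigenfunctions of the annulus $B_R\setminus\overline U$; the identity $w=(A+s^2)^{-1}(\hat g\,f)$ behind that expansion requires $w$ to lie in the domain of the Dirichlet realization of $A$ on that set, and $w$ does not vanish on $\partial U$. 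The paper avoids this by enlarging the domain to $B_{R_1}\setminus\overline{D_1}$ with $R_1>R$ (respectively a neighborhood $\tilde\Omega$ of $\mathrm{supp}(f)$ in the final step), on whose entire boundary the relevant function does vanish, and which moreover contains an open shell $B_{R_1}\setminus\overline{B_R}$ on which it vanishes identically; it is this open set, rather than Cauchy data on $\Gamma_R$, that feeds the unique continuation applied to each eigenprojection. With these two adjustments your argument coincides with the paper's proof.
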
}

If $f$ is known and $c(x)\equiv 1$, the unique determination of the sound-soft obstacle $D$ can be proved with the dynamical data over a finite time, following Isakov's idea of using the sharp unique continuation for hyperbolic equations with analytic coefficients; see \cite[Theorem 5.1]{Isakov}.
If the obstacle $D$ is absent and the background medium is homogeneous, it was shown in \cite{BHKY,HLLZ} via Huygens' principle and Fourier transform that the boundary surface data can be used to uniquely determine $f$ in both elastodynamics and electromagnetism. Below we shall prove \tcr{uniqueness in determining both} $D$ and $f$ in an inhomogeneous medium. For this purpose, we need to apply the Laplace transform in place of the Fourier transform, because the strong Huygens' principle is no longer valid. 

\textbf{Proof of Theorem \ref{th1}}.
Obviously, $u_1$ and $u_2$ are solutions to
\begin{align}\label{1}
\begin{cases}
\frac{1}{c^2(x)}\textcolor{rot}{\partial^2_t}u_j(x,t) -  \triangle u_j(x,t) = f_j(x)g(t), \quad &(x,t) \in \mathbb R^3\backslash  \bar{D}_j\times \mathbb R^+,\\
u_j(x,0) = \partial_t u_j(x,0) = 0 , \quad & x \in \mathbb R^3\backslash \bar{D}_j,\\
u_j(x,t) = 0, \quad & (x,t) \in \partial D_j \times \mathbb R^+,
\end{cases}
\end{align}
for $j=1,2$.
By using standard argument for deriving energy estimates, we can prove that $u_j(x,t)$ $(j=1,2)$ has a long time behavior which is at most of polynomial type (see e.g., \cite[Proposition 9]{HK}). This allows us to define the Laplace transform of $u:=u_1-u_2$ with respect to the time variable as following:
\begin{align}\label{laplace}
\hat{u}(x,s):= \int_{\mathbb R}u(x,t)\textcolor{rot}{{\rm e}}^{-st}{\rm d}t, \quad s>0, \quad x\in B_R.
\end{align}
Denote by $\tilde{D}$ the unbounded component of $\R^3\backslash\overline{D_1\cup D_2}$ and set $f:=f_1-f_2$. It then follows that
\begin{align*}
\begin{cases}
\frac{1}{c^2(x)}\textcolor{rot}{\partial^2_t}u(x,t) -  \triangle u(x,t) = f(x)g(t), \quad &(x,t) \in   \tilde{D} \times \mathbb R^+,\\
u(x,0) = \partial_t u(x,0) = 0 , \quad & x \in \tilde{D},\\
u(x,t) = 0, \quad & (x,t) \in \Gamma_R \times \mathbb R^+.
\end{cases}
\end{align*}
For notational convenience
we set $\Omega_1:= B_R \backslash \bar{D}_1$, $\Omega_2:= B_R \backslash \bar{D}_2$ and $\Omega = B_R\cap\tilde{D}.$

Since $\widehat{\textcolor{rot}{\partial^2_t}u}(x,s) = s^2\hat{u}(x,s)$ for all $s>0$ and the background wave speed $c(x)$ is known, the function $x\mapsto\hat{u}(x,s)$ solves
\begin{align}\label{bvp}
\begin{cases}
\triangle \hat{u}(x,s) - \frac{s^2}{c^2(x)}\hat{u}(x,s) = f(x)\;\hat{g}(s) \quad & \text{in} \quad \tilde{D},\\
\hat{u}(x,s)=0 \quad & \text{on} \quad \partial \tilde{D}.
\end{cases}
\end{align}
\tcb{Moreover, the uniqueness of solutions to the wave equation in the unbounded domain $|x|>R$ with the homogeneous Dirichlet boundary condition on $\Gamma_R \times (0, \infty)$, which can be justified via standard energy estimate (see e.g. \cite{HLLZ} for a proof in electromagnetism), implies that $u(x,t)=0$ for $(x,t)\in(\mathbb R^3\backslash B_R)\times (0, \infty)$.  By Laplace transform,
this gives the relation $\hat{u}(x,s)=0$ for $(x,s)\in(\mathbb R^3\backslash B_R) \times (0, \infty)$.} \tcb{In view of \eqref{th1a}, fixing $\mathcal O_R=\mathcal O\cap (\mathbb R^3\backslash B_R)$, we deduce that, for all $s>0$, the restriction on $\mathcal O$ of $\hat{u}(\cdot,s)$ solves
\begin{align}
\begin{cases}
\triangle \hat{u}(x,s) - \frac{s^2}{c^2(x)}\hat{u}(x,s) = 0 \quad & x\in \mathcal O,\\
 \hat{u}(x,s)=0 \quad &x\in \mathcal O_R.
\end{cases}
\end{align}
Applying unique continuation  results for elliptic equations (e.g. \cite[Theorem 1.1]{GL} and \cite[Theorem 1]{SS}), we deduce that \[
\hat{u}(x,s)=0 \quad \text{for all} ~ x\in\mathcal O.\]
In particular, we have
\[\hat{u}(x,s)=0 \quad \text{for all} ~ x\in \mathcal O\cap \tcg{G}\]
and we deduce that
\begin{align}
\begin{cases}
\triangle \hat{u}(x,s) - \frac{s^2}{c^2(x)}\hat{u}(x,s) = 0 \quad & x\in \tcg{G},\\
\hat{u}(x,s)=0 \quad &x\in \mathcal O\cap \tcg{G}.
\end{cases}
\end{align}
Applying again unique continuation results for elliptic equations and the fact that
\tcg{$O\cap (\R^3\backslash B_R)\neq\emptyset$, we deduce that
 \begin{align}\label{uc}
\hat{u}(x,s)=0 \quad \text{for all} ~ x\in\overline{G}.\end{align}}
}

We first prove $D_1=D_2$. Assuming on the contrary that $D_1\neq D_2$, we shall derive a contraction as follows.
Without loss of generality we may assume $D^* := (U\backslash G) \backslash \overline{D}_1 \neq \emptyset$. \rot{Then, using the fact that $\partial G\subset [\partial U\cup \partial (D_1\cup D_2)]$ it holds that
$$\partial D^*\subset [\partial D_1\cup (\partial G\backslash \partial U)]\subset \partial D_1\cup (\partial G\cap \partial D_2).$$}
On the other hand, from \eqref{uc} we deduce that $$\hat{u}_1(x,s)=\hat{u}_2(x,s)=0,\quad \rot{x\in\partial G\cap \partial D_2},\ s>0.$$ Therefore, combining this with the fact that
$$\hat{u}_1(x,s)=0,\quad x\in\partial D_1,\ s>0,$$
we deduce that $\hat{u}_1$ solves the boundary value problem
\begin{align*}
\begin{cases}
\triangle \hat{u}_1(x,s) - \frac{s^2}{c^2(x)}\hat{u}_1(x,s) = 0 \quad & \text{in} ~ D^*,\\
\hat{u}_1(x,s) = 0 \quad &\text{on} ~ \partial D^*\,\tcr{,}
\end{cases}
\end{align*}
 On the other hand, for all $s>0$, $0$ is not in the spectrum of the operator $-\triangle  + \frac{s^2}{c^2(x)}$ with Dirichlet boundary condition on $D^*$ which is contained into
$\left[\frac{s^2}{\| c\|_{L^\infty( D^*)}},+\infty\right)$. Therefore, we have $\hat{u}_1(\cdot, s)\equiv 0$ in $D^*$.
 Applying unique continuation,
 we get $\hat{u}_1(\cdot,s) = 0$ in $U\backslash\overline{D_1}$ for each $s>0$. In the same way, applying \eqref{th1a} we deduce that $\hat{u}_1(\cdot,s) = 0$ in $\mathcal O$ and then that $\hat{u}_1(\cdot,s) = 0$ on $\mathbb R^{3}\setminus \overline{B_R}$. Here we use the fact that $(\mathbb R^{3}\setminus \overline{B_R})\cap\mathcal O\neq\emptyset.$

\tcb{For $R_1>R$, we fix  $\tilde{\Omega}_1=B_{R_1}\backslash\overline{D_1}$ and $\Omega^*_1=B_{R_1}\backslash\overline{B_R}$. Then we have
 \ben
 \triangle \tcr{\hat{u}_1}(x,s) - \frac{s^2}{c^2(x)}\tcr{\hat{u}_1}(x,s) = \tcr{f_1}(x)\;\hat{g}(s) \quad & \text{in} \quad \tilde{\Omega}_1,
 \enn and
\begin{align*}
\hat{u}_1(x,s) = 0,\quad x\in\Omega^*_1
\end{align*}
for every \tcr{$s>0$}. Denote by $\langle \cdot, \cdot\rangle$ the inner product in $L^2(\tilde{\Omega}_1;c^{-2}dx)$, i.e.,
\[
\langle u, v\rangle := \int_{\tilde{\Omega}_1}c^{-2}(x)u(x)\bar{v}(x){\rm d}x, \quad u, v\in L^2(\tilde{\Omega}_1).
\]
Denote by $\{\gamma_l, \phi_{l, k}(x)\}_{l\in \N^+, k\leq m_l}$ the eigenvalues and an associated orthonormal basis of eigenfunctions for the operator $-c^{2}(x)\triangle$ over $\tilde{\Omega}_1$ with the Dirichlet boundary condition acting on $L^2(\tilde{\Omega}_1;c^{-2}dx)$. Here the eigenvalues satisfy the relation $0<\gamma_1<\gamma_2<\cdots<\gamma_l<\cdots$ and $\{\phi_{l, k}\}_{k=1}^{m_l}$ denotes the eigenspace associated with $\gamma_l$.
In $\tilde{\Omega}_1$ we can represent the functions $c^2(x) f_1(x)$ and
 $\hat{u}_1(x,s)$ as
\be\nonumber
c^2(x) f_1(x)&=&\sum_{ l \in \mathbb N^+} \sum_{k=1}^{m_l}\langle c^{2}f_1, \phi_{ l, k}\rangle\phi_{ l, k}(x),\quad m_l\in \N^+, \\ \label{repre}
\hat{u}_1(x,s) &=& \hat{g}(s) \sum_{ l \in \mathbb N^+}\frac{\sum_{k=1}^{m_l}\langle c^{2}f_1, \phi_{ l, k}\rangle\phi_{ l, k}(x)}{s^2 + \gamma_{l}}, \quad s>0.
\en
}
Note that the convergence of the series (\ref{repre}) can be understood in $L^2(\tilde{\Omega}_1;c^{-2}dx)$. Since $g \in L^2(\mathbb R^+)$ is supported in $[0,T_0]$ and does not vanish identically, there exists an interval $I \subset (0,+\infty)$ such that $|\hat{g}(s)|>0$ for \tcr{all} $s \in I$.
Recalling that $\hat{u}_1(x,s) = 0$ in \tcr{$\Omega_1^*$}, we have for all $s\in I$ that
\[
 \sum_{ l \in \mathbb N^+}\frac{\sum_{k=1}^{m_l}\langle c^{2}f_1, \phi_{ l, k}\rangle\phi_{ l, k}(x)}{s^2 + \gamma_{l}} = 0 \quad \text{for}~ \text{a.e.} ~x\in \Omega^*_1.
\]
On the other hand, the function
\[
G(x,z):\quad z\rightarrow \sum_{ l \in \mathbb N^+}\frac{\sum_{k=1}^{m_l}\langle c^{2}f_1, \phi_{ l, k}\rangle\phi_{ l, k}(x)}{z + \gamma_{l}}, \quad z\in\mathbb C \backslash \{-\gamma_l: l\in \mathbb N^+\}
\]
can be regarded as a holomorphic function in the variable $z$ taking values in $L^2(\Omega^*_1)$. Hence, by unique continuation for holomorphic functions we deduce that the condition
\[
G(x,s^2) = \tcr{\hat{u}_1}(x,s)|_{x\in \Omega^*_1} = 0 \quad \text{for all}\quad s\in I
\]
implies that
\[
G(x,z) = 0 \quad \text{for all}\quad z\in\mathbb C \backslash \{-\gamma_l: l\in \mathbb N^+\}.
\]
It follow that
\begin{align}\label{2.8}
(z + \gamma_j) G(x,z) = 0, \quad z\in\mathbb C \backslash \{-\gamma_l: l\in \mathbb N^+\}, \quad j\in \mathbb N^+.
\end{align}
Therefore, letting $z\rightarrow -\gamma_j$ in \eqref{2.8} yields
\[
\phi_j(x):= \sum_{k=1}^{m_j}\langle c^{2}f_1, \phi_{ j, k}\rangle\phi_{j, k}(x) = 0\quad \text{for} \quad x\in \Omega^*_1.
\]
On the other hand, we deduce that $\phi_j$ satisfies the elliptic equation
\[
\triangle \phi_j(x) + \frac{\gamma_j}{c^2(x)}\phi_j(x) = 0, \quad x\in \Omega^*_1,
\]
since $\phi_{ l, k}$ are eigenfunctions.
Applying the unique continuation of the Helmholtz equation gives
\[
\sum_{k=1}^{m_j}\langle c^{2}f_1, \phi_{j, k}\rangle\phi_{j, k}(x) = 0\quad \text{for} \quad x\in \tilde{\Omega}_1,
\]
leading to the relations
\[
\langle c^{2}f_1, \phi_{j, k}\rangle = 0, \quad k=1,2, \cdot\cdot\cdot, m_j.
\]
Finally, by the arbitrariness of $j \in \mathbb N^+$ and the fact that $\text{supp}(f_1)\subset \tilde{\Omega}_1$, we obtain
\[
f_1 = c^{-2}(c^2f_1)\equiv0 \quad \text{for} ~x\in \tilde{\Omega}_1,
\]
which is a contradiction to $f_1 \neq 0$ in $\tilde{\Omega}_1$. Thus, we obtain $D_1=D_2$.

It remains to prove the coincidence of the source $f_1=f_2$. We shall deduce $f=f_1-f_2\equiv0$ from the boundary value problem \eqref{bvp} in an open set $\tilde{\Omega}$ such that $\text{supp}(f) \subset \tilde{\Omega} \subset \subset \Omega$. \tcr{It is easy to prove that $\hat{u}(x,s)$ vanishes in $\tilde{\Omega}\backslash\overline{\text{supp}(f)}$.
Similarly to \eqref{repre}, we can represent $c^2(x) f(x), \hat{u}(x,s)$} in the form of \eqref{repre} in
$\tilde{\Omega}$.
Consequently, following similar arguments in the first step we can obtain $f=0$ in \tcr{$\tilde{\Omega}$} by making use of the vanishing of $u$ in
$\tcr{\tilde{\Omega} \backslash\overline{\text{supp}(f)}}$.
\qed

\begin{rema}\label{rema1}
\tcb{(i) Assuming that $c\in\mathcal C^1(\mathbb R^n)$, one can apply the local unique continuation results of \cite[Theorem 1]{Ta} in order to  derive  a global Holmgren uniqueness theorem similar to \cite[Theorem 3.16]{KKL} (see also \cite[Theorem A.1.]{KMO}). Combining this with the arguments used in \cite[Theorem 2]{HK} it is possible to prove Theorem \ref{th1} in a more straightforward way. However, for more general coefficients $c\in L^\infty(\mathbb R^n)$, it is not clear that \cite[Theorem 1]{Ta} holds true and we can not apply such arguments. In that sense, in contrast to \cite[Theorem 2]{HK}, the approach considered in Theorem \ref{th1} can be applied to equations with less regular coefficients.\\}

(ii)The result of Theorem \ref{th1} carries over to
other boundary conditions of the form
\ben
\partial_\nu u-a\partial_t u-b u=0\quad\mbox{on}\quad \partial D\times \R^+,
\enn where $a\geq 0$ and $b\geq 0$. The proof can be carried out by applying the Laplace transform with the variable $s=s_1+is_2\in \C^+$ such that $s_1, s_2>0$; we refer to \cite{Isakov08} by Isakov where
uniqueness results for recovering impenetrable obstacles were discussed.
 Note that although the gap domain $D^*$ between two obstacles might be cuspidal and non-lipschitzian, the regularity assumption of $\partial D$ ensures that $\hat{u}(\cdot,s)\in H^2(B_R\backslash\overline{D})$ and \tcr{the boundary $\partial D^*$ of the gap domain is piecewise smooth.}
Hence, the traces $\hat{u}(x,s)$ and $\partial_\nu\hat{u}(x,s)$ are well defined on $\tcr{\partial D^*}$. 
 However, it remains unclear to us how to treat penetrable scatterers with transmission conditions on the interface.

(iii) The proof of Theorem \ref{th1} can be simplified if the background medium is homogeneous, i.e., $c(x) \equiv 1$ in $\R^3\backslash\overline{D}$. In fact, \tcr{in a homogeneous medium the uniqueness proof} can be reduced to verifying the vanishing of $f_1$ if
\ben\label{bvp1}
\triangle \hat{u}_1(x,s) - s^2 \hat{u}_1(x,s) = \hat{g}(s)f_1(x), &&\quad x \in \tcr{\tilde{\Omega}_1},\\
\hat{u}_1(x,s) = \partial_{\nu}\hat{u}_1(x,s)(x,s) = 0, &&\quad x\in \partial \tcr{\tilde{\Omega}_1}
\enn for each $s>0$ and for some domain \tcr{$\tilde{\Omega}_1$} containing ${\rm supp}(f_1)$.
Multiplying both sides of \eqref{bvp1} by the test function $\varphi(x) = \textcolor{rot}{{\rm e}}^{s x \cdot d}$ with $d\in \R^3$, $|d|=1$ and integrating by parts over $B$ yield
\begin{align}\label{ftf}
\hat{g}(s)\int_{\tcr{\tilde{\Omega}_1}} f_1(x)\textcolor{rot}{{\rm e}}^{s x \cdot d}{\rm d}x = 0\quad\mbox{for all}\quad s\in \R.
\end{align}
Clearly,  $\hat{g}(z)$ and $\int_{\tcr{\tilde{\Omega}_1}} f_1(x)\textcolor{rot}{{\rm e}}^{z x \cdot d}{\rm d}x$ are both holomorphic functions with respect to the variable $z \in \mathbb C$. Using the assumption
 $g \neq 0$ it is easy to prove that $\int_{\tcr{\tilde{\Omega}_1}} f_1(x)\textcolor{rot}{{\rm e}}^{s x \cdot d}{\rm d}x = 0$ for all \tcr{$s\in \mathbb R$ and $|d|=1$}. \tcr{This implies that the Laplace transform of $f_1$ vanishes everywhere and hence $f_1\equiv0$.} 
\end{rema}
Consider the acoustic wave equation with a homogeneous source term and inhomogeneous initial conditions $v_0$ and $v_1$:
\begin{align}\label{II}
\begin{cases}
\frac{1}{c^2(x)}\textcolor{rot}{\partial^2_t}u(x,t) - \triangle u(x,t) = 0, \quad &x=(x_1,x_2,x_3)\in\mathbb R^3\backslash \bar{D}, t>0,\\
u(x,0) = v_0(x), \partial_t u(x,0) = v_1(x),\quad &x \in \mathbb R^3\backslash D,\\
u(x,t) = 0, \quad &(x,t)\in \partial D \times \mathbb R^+.
\end{cases}
\end{align}
Applying the Laplace transform to $u$ and noting that $\widehat{\textcolor{rot}{\partial^2_t}u}(x,s) = s^2\hat{u}(x,s)-v_{1}-sv_{0}$ yield the boundary value problem
\begin{align*}
\begin{cases}
\triangle \hat{u}(x,s) - \frac{s^2}{c^2(x)}\hat{u}(x,s) = \frac{1}{c^2(x)}(sv_0(x) + v_1(x)) \quad & \text{in} ~ R^3\backslash \bar{D},\\
\hat{u}(x,s) = 0 \quad &\text{on} ~ \partial D\,.
\end{cases}
\end{align*}
Following similar arguments as those in the proof of Theorem \ref{th1}, we can determine simultaneously the obstacle $D$, the initial displacement \textcolor{rot}{$v_0$} and initial velocity \textcolor{rot}{$v_1$} from the radiated field $u$ measured on the surface $\Gamma_R \times \mathbb R^+$.
\tcb{\begin{coro}\label{coro2.3} Let  $c\in L^\infty(\mathbb R)$ be such that $(1-c)$ is  supported in $B_R$ and \eqref{cc} is fulfilled. For $j=1,2$, let $D_j$ be an obstacle contained into $U$, $v_{j,0}\in H^2(\R^3\backslash\overline{D})$ and  $v_{j,1}\in H^1(\R^3\backslash\overline{D})$  satisfy
\tcg{$\overline{supp(v_{j,0})}\cup \overline{supp(v_{j,1})}\subset B_R\backslash\overline{U}$}
 with $U\setminus\overline{D_j}$  connected. Here we assume that $v_{j,0}$, $v_{j,1}$, $j=1,2$, are \tcg{non-uniformly} vanishing. Assume also that there exists $\mathcal O$ an open and connected subset of $\mathbb R^3$ such that \eqref{th1a} is fulfilled \tcg{with the last relation replaced by
 \[
 \mathcal O\cap \textrm{supp}(v_{1,j}-v_{2,j})=\emptyset,\quad j=0,1.
  \] }
 Then, for $u_j$ solving \eqref{II} with $v_0=v_{j,0}$, $v_1=v_{j,1}$ and $D=D_j$, the condition
\begin{equation}\label{c1a} u_1(x,t)=u_2(x,t),\quad x\in \Gamma_R, \, t>0,\end{equation}
implies $D_1=D_2$, $v_{1,0}=v_{2,0}$ and $v_{1,1}=v_{2,1}$.
\end{coro}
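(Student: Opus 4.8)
The plan is to mimic the proof of Theorem \ref{th1} almost verbatim, exploiting the structural parallel between the two problems. First I would apply the Laplace transform to the difference $u:=u_1-u_2$ of the two solutions of \eqref{II}. Setting $f_{j,s}(x):=c^{-2}(x)\bigl(s\,v_{j,0}(x)+v_{j,1}(x)\bigr)$ and $f_s:=f_{1,s}-f_{2,s}$, the function $\hat u(\cdot,s)$ solves, on the unbounded component $\tilde D$ of $\mathbb R^3\setminus\overline{D_1\cup D_2}$, the elliptic boundary value problem
\begin{align*}
\begin{cases}
\triangle \hat{u}(x,s) - \frac{s^2}{c^2(x)}\hat{u}(x,s) = f_s(x) \quad & \text{in} ~ \tilde{D},\\
\hat{u}(x,s) = 0 \quad &\text{on} ~ \Gamma_R\,,
\end{cases}
\end{align*}
which is exactly the form \eqref{bvp} with the single-term source $f(x)\hat g(s)$ replaced by the affine-in-$s$ source $f_s(x)$. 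The finite-speed-of-propagation argument giving $u\equiv 0$ outside $B_R$ still applies, so the Laplace transform vanishes on $\mathbb R^3\setminus B_R$, and the unique continuation chain through $\mathcal O$ and $G$ (using the hypothesis \eqref{th1a} with the support condition now imposed on $v_{1,j}-v_{2,j}$, $j=0,1$) yields $\hat u(\cdot,s)=0$ on $\overline G$ for every $s>0$, i.e. the analogue of \eqref{uc}.

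Next I would establish $D_1=D_2$. The topological argument is identical: assuming $D_1\neq D_2$ one picks $D^*:=(U\setminus G)\setminus\overline{D_1}\neq\emptyset$ with $\partial D^*\subset \partial D_1\cup(\partial G\cap\partial D_2)$, on which $\hat u_1(\cdot,s)$ vanishes. Since $\hat u_1(\cdot,s)$ solves the homogeneous Helmholtz-type equation $\triangle \hat u_1-\frac{s^2}{c^2}\hat u_1=0$ inside $D^*$ (the source $f_{1,s}$ being supported away from $U$) and $0$ lies below the Dirichlet spectrum of $-\triangle+\frac{s^2}{c^2}$ on $D^*$, we get $\hat u_1(\cdot,s)\equiv 0$ on $D^*$, hence on $U\setminus\overline{D_1}$, on $\mathcal O$, and finally on $\mathbb R^3\setminus\overline{B_R}$ by unique continuation. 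Then, arguing on $\tilde\Omega_1=B_{R_1}\setminus\overline{D_1}$ with $R_1>R$, I would expand both $c^2 f_{1,s}$ and $\hat u_1(\cdot,s)$ in the Dirichlet eigenbasis $\{\gamma_l,\phi_{l,k}\}$ of $-c^2\triangle$. The one genuinely new feature is that the source now depends on $s$: writing $c^2 f_{1,s}=s\,c^2 f_{1,0}+c^2 f_{1,1}$ where $f_{1,0}=c^{-2}v_{1,0}$, $f_{1,1}=c^{-2}v_{1,1}$, the eigenfunction coefficients become affine in $s$, and the resolvent expansion gives
\begin{align*}
\hat u_1(x,s)=\sum_{l\in\mathbb N^+}\frac{\sum_{k=1}^{m_l}\bigl(s\langle c^2 f_{1,0},\phi_{l,k}\rangle+\langle c^2 f_{1,1},\phi_{l,k}\rangle\bigr)\phi_{l,k}(x)}{s^2+\gamma_l}.
\end{align*}
This is still a holomorphic $L^2(\Omega_1^*)$-valued function of $z=s^2$ (the numerator $s=\sqrt z$ contributes only an overall analytic factor after separating parities, or one simply treats $s\mapsto\hat u_1(\cdot,s)$ as meromorphic in $s$ with simple poles at $\pm i\sqrt{\gamma_l}$), so multiplying by $(s^2+\gamma_j)$ and passing to the residue at the pole isolates $\sum_k(s\langle c^2 f_{1,0},\phi_{j,k}\rangle+\langle c^2 f_{1,1},\phi_{j,k}\rangle)\phi_{j,k}=0$ on $\Omega_1^*$ for all $s$.

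The step I expect to require the most care is precisely this residue extraction, because the two data $v_{1,0}$ and $v_{1,1}$ enter through the \emph{same} resolvent denominators. Since the numerator is an affine function of $s$ that must vanish for a range of $s$ (indeed for all $s$ after analytic continuation, no auxiliary interval $I$ being needed here as the source is a polynomial in $s$ rather than carrying a factor $\hat g(s)$), the coefficients of $s^1$ and $s^0$ vanish separately, giving both $\sum_k\langle c^2 f_{1,0},\phi_{j,k}\rangle\phi_{j,k}=0$ and $\sum_k\langle c^2 f_{1,1},\phi_{j,k}\rangle\phi_{j,k}=0$ on $\Omega_1^*$. Unique continuation for the Helmholtz equation then extends these to $\tilde\Omega_1$, forcing $\langle c^2 f_{1,0},\phi_{j,k}\rangle=\langle c^2 f_{1,1},\phi_{j,k}\rangle=0$ for all $j,k$, whence $v_{1,0}=v_{1,1}\equiv 0$ on $\tilde\Omega_1$, contradicting the non-uniform vanishing assumption; thus $D_1=D_2$. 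With $D_1=D_2=:D$, I would repeat the eigenfunction argument on a domain $\tilde\Omega$ with $\overline{\mathrm{supp}(v_{1,0}-v_{2,0})}\cup\overline{\mathrm{supp}(v_{1,1}-v_{2,1})}\subset\tilde\Omega\subset\subset\Omega$, again separating the $s^1$ and $s^0$ coefficients to conclude $v_{1,0}=v_{2,0}$ and $v_{1,1}=v_{2,1}$ simultaneously. The only substantive deviation from Theorem \ref{th1} is therefore the bookkeeping that the affine-in-$s$ dependence lets one recover two functions at once from a single family of measurements.
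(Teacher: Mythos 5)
Your proposal is correct and follows exactly the route the paper intends: the paper itself only sketches the argument for Corollary \ref{coro2.3} by reducing \eqref{II} via the Laplace transform to the elliptic problem with right-hand side $c^{-2}(sv_0+v_1)$ and invoking the proof of Theorem \ref{th1}. The one genuinely new point you identify — that the affine-in-$s$ numerator forces you to separate the $s^1$ and $s^0$ contributions, which you justify correctly either by parity decomposition or by evaluating the residues at both conjugate poles $\pm i\sqrt{\gamma_j}$ — is precisely the detail the paper leaves implicit, and your treatment of it is sound.
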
}

\begin{rema} \begin{itemize}
\item[(i)] \tcb{ Like Theorem \ref{th1},  for $c\in\mathcal C^1(\mathbb R^n)$  one can deduce and even improve Corollary \ref{coro2.3} by using an approach based on unique continuation properties with arguments borrowed from \cite[Theorem 1]{Ta}, \cite[Theorem 3.11]{KKL} and \cite[Theorem 2]{HK}.  However, since for $c\in L^\infty(\mathbb R^n)$, it is not clear that \cite[Theorem 1]{Ta} holds true,  we can not consider such approach. In that sense, in contrast to other similar results, Corollary \ref{coro2.3} can be applied to equations with less regular coefficients $c$.\\}
 \item[(ii)] \tcr{The results of Theorem \ref{th1} and Corollary \ref{coro2.3} hold true with a finite time observation data on $\Gamma_R\times(0,T)$ if $g(0)\neq 0$. In fact, by Duhamel's principle, we may represent $u_j$ to the equation (\ref{1}) as
     \be\label{vj}
     u_j(x,t)=\int_0^t g(t-s)\,v_j(x,s)\,ds,\qquad t\in(0,\infty),
     \en where $v_j$ solves the initial value problem of the homogeneous wave equation
 \ben
\begin{cases}
\frac{1}{c^2(x)}\textcolor{rot}{\partial^2_t}v_j(x,t) -  \triangle v_j(x,t) = 0, \quad &(x,t) \in \mathbb R^3\backslash  \bar{D}_j\times \mathbb R^+,\\
v_j(x,0) =0,\quad \partial_t v_j(x,0) = f_j(x) , \quad & x \in \mathbb R^3\backslash \bar{D}_j,\\
v_j(x,t) = 0, \quad & (x,t) \in \partial D_j \times \mathbb R^+.
\end{cases}
\enn
If $g(0)\neq 0$, differentiating (\ref{vj}) and then applying the Grownwall inequality could lead to the relation $v_1(x,t)=v_2(x,t)$ in $\{|x|>R\}\times (0,T)$, if $u_1(x,t)=u_2(x,t)$ on $\Gamma_R\times(0,T)$. Together with the unique continuation for the wave equation (\cite{EINT,ET}), this implies the coincidence of the initial velocities, i.e., $f_1=f_2$. The proof of $\partial D_1=\partial D_2$ can be proceeded analogously. In the case of the observation data over infinite time, one can also apply the Laplace transform to (\ref{vj}) to prove Theorem \ref{th1} and Corollary \ref{coro2.3}.}
\end{itemize}
\end{rema}
\subsection{General source terms in a family of controllable background media}\label{sub2}
As mentioned at the beginning of section \ref{ip1}, it is in general impossible to uniquely recover \tcr{a general source term of the form} $F(x,t)$, \tcr{due to the presence of time-dependent non-radiating sources}. This subsection is devoted to proving uniqueness with a family of solutions $u_\lambda(x, t)$ measured on $\Gamma_R\times \R^+$.

Consider the wave equations
\begin{align}\label{eqn2}
\begin{cases}
q_{\lambda}(x)\textcolor{rot}{\partial^2_t}u_{\lambda}(x,t) - \triangle u_{\lambda}(x,t) = F(x,t) \quad &\text{in} ~ \textcolor{rot}{\mathbb R^3 \backslash \bar{D}} \times \mathbb R^+,\\
u_{\lambda}(x,0) = \partial_t u_{\lambda}(x,0) = 0 \quad &\text{in}~\mathbb R^3,\\
u_{\lambda}(x,t)=0 &\text{on}~\partial D\times \R^+,
\end{cases}
\end{align}
where $q_{\lambda}(x)$ is the background medium function satisfying
\begin{align}\label{q}
q_{\lambda}(x)=
\begin{cases}
\lambda , \quad & x\in B_R,\\
1, \quad & x\in \mathbb R^3 \backslash \overline{B}_R.
\end{cases}
\end{align}
Our aim is to recover the compacted supported function $F$ from the data $\{u_{\lambda}(x,t): x\in \Gamma_R, ~ t>0, ~\lambda \in (a,b)\}$ for some \textcolor{rot}{$0<a<b$}. Physically, \tcr{such kind of the measurement data can be obtained by changing the background medium artificially and locally for the purpose of recovering a time-dependent source term which might be non-radiating for a fixed parameter}. Our uniqueness result below shows that any \tcr{compactly supported} acoustic source term cannot be a \tcr{non-radiating} source for a range of parameters $\lambda\in (a,b)$.

\tcb{\begin{theo}\label{th3}
 For $j=1,2$, let $D_j$ be an obstacle contained into $U$ and $F_j\in L^2((B_R\backslash \overline{D_j})\times \mathbb R_+)$ be supported on $(B_R\backslash \overline{D_j})\times [0,T]$, with $T>0$, satisfy
\begin{equation}\label{th3a}
\tcg{\overline{ \textrm{supp}(F_j(\cdot,t))}\subset B_R\backslash \overline{U}},\quad t\in(0,T)
\end{equation}
 and $U\setminus\overline{D_j}$ is connected. Here we assume that $F_1,F_2$ are \tcg{non-uniformly} vanishing. Assume also that there exists $\mathcal O$ an open and connected subset of $\mathbb R^3$ such that \eqref{th1a} is fulfilled \tcg{with the last relation replaced by
 \[
 \mathcal O\cap \textrm{supp}(F_1(\cdot, t)-F_2(\cdot, t))=\emptyset,\quad \mbox{for all}\quad t>0.
  \] }
 Then, for $u_{j,\lambda}$ solving \eqref{eqn2} with $\lambda \in (a,b)$, $F=F_j$ and $D=D_j$, the condition
\begin{equation}\label{th3b} u_{1,\lambda}(x,t)=u_{2,\lambda}(x,t),\quad x\in \Gamma_R, \, t>0,\ \lambda \in (a,b)\end{equation}
implies $D_1=D_2$ and $F_1=F_2$.  Here $a$ and $b$ are two positive constants satisfying $a<b$.
\end{theo}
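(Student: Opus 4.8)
The plan is to follow the two–step skeleton of the proof of Theorem \ref{th1} (first identify the obstacle, then the source), but to replace the separable factorization through $\hat g(s)$ by an argument that exploits analyticity in the medium parameter $\lambda$. First I would Laplace transform \eqref{eqn2} in time; writing $\hat u_{j,\lambda}(x,s)=\int_{\R}u_{j,\lambda}(x,t)\mathrm e^{-st}\,\mathrm dt$ (legitimate by the at–most–polynomial growth in $t$, as in \cite{HK}), the transformed field solves $\triangle\hat u_{j,\lambda}-q_\lambda(x)s^2\hat u_{j,\lambda}=-\hat F_j(x,s)$ in $\R^3\setminus\overline{D_j}$. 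Outside $B_R$ one has $q_\lambda\equiv1$ and $F_j\equiv0$, so the exterior energy estimate together with \eqref{th3b} gives $u_{1,\lambda}=u_{2,\lambda}$ on $(\R^3\setminus B_R)\times\R^+$ for each $\lambda$, hence $\hat w_\lambda:=\hat u_{1,\lambda}-\hat u_{2,\lambda}=0$ there and, by elliptic regularity, the full Cauchy data of $\hat w_\lambda$ vanish on $\Gamma_R$. The structural point to keep in mind is that $\hat F_j$ is \emph{independent of $\lambda$}, and that inside $B_R$ the coefficient $q_\lambda\equiv\lambda$ is constant, so $\lambda$ enters the interior equation only through the spectral parameter $\lambda s^2$.

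Exactly as in Theorem \ref{th1}, for each fixed $\lambda$ I would use the set $\mathcal O$: since $\mathcal O\cap\mathrm{supp}(F_1(\cdot,t)-F_2(\cdot,t))=\emptyset$ for all $t$, the transform $\hat w_\lambda$ is a homogeneous elliptic solution on $\mathcal O$ vanishing on $\mathcal O\cap(\R^3\setminus B_R)$, so unique continuation (\cite{GL,SS}) propagates the vanishing through $\mathcal O$ into $G$ and yields the analogue of \eqref{uc}, namely $\hat w_\lambda=0$ on $\overline G$. To prove $D_1=D_2$ I would argue by contradiction as there: if $D_1\neq D_2$, a gap component $D^\ast$ appears on which $\hat u_{1,\lambda}$ solves a homogeneous Dirichlet problem for $-\triangle+\lambda s^2$; since $0$ lies below the spectrum of this operator, $\hat u_{1,\lambda}\equiv0$ on $D^\ast$, and unique continuation forces $\hat u_{1,\lambda}=0$ on $U\setminus\overline{D_1}$, on $\mathcal O$, and finally on $\R^3\setminus\overline{B_R}$. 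Thus both the obstacle step (with $\hat u_{1,\lambda}$ and $F_1$) and the subsequent source step (with $\hat w_\lambda$ and $F:=F_1-F_2$) reduce to one and the same implication: if $\hat v_\lambda$ satisfies $-\triangle\hat v_\lambda+\lambda s^2\hat v_\lambda=\hat\Phi(\cdot,s)$ in $\Omega:=B_R\setminus\overline{D_1}$ with $\hat v_\lambda=0$ on $\partial D_1$ and vanishing Cauchy data on $\Gamma_R$ for all $\lambda\in(a,b)$, then $\Phi\equiv0$.

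This implication is where the family $\lambda\in(a,b)$ is used, and it is the genuinely new ingredient. Let $\{\mu_l,\psi_{l,k}\}$ be the Dirichlet eigenvalues and an orthonormal eigenbasis of $-\triangle$ on $\Omega$; crucially these are independent of both $\lambda$ and $s$. Expanding the resolvent,
\[
\hat v_\lambda(x,s)=\sum_{l}\frac{1}{\mu_l+\lambda s^2}\sum_{k}\langle\hat\Phi(\cdot,s),\psi_{l,k}\rangle\,\psi_{l,k}(x),
\]
which, through $z=\lambda s^2$, is the value of the holomorphic resolvent map $z\mapsto(-\triangle+z)^{-1}\hat\Phi(\cdot,s)$ on $\C\setminus\{-\mu_l\}$. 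The overdetermination $\partial_\nu\hat v_\lambda=0$ on $\Gamma_R$ says that the $L^2(\Gamma_R)$-valued holomorphic function $z\mapsto\partial_\nu[(-\triangle+z)^{-1}\hat\Phi(\cdot,s)]|_{\Gamma_R}$ vanishes on the interval $(as^2,bs^2)$, hence identically. Taking the residue at $z=-\mu_j$ gives $\partial_\nu\Phi_j=0$ on $\Gamma_R$, where $\Phi_j:=\sum_k\langle\hat\Phi(\cdot,s),\psi_{j,k}\rangle\psi_{j,k}$ solves $-\triangle\Phi_j=\mu_j\Phi_j$ in $\Omega$ with $\Phi_j=0$ on $\partial\Omega$. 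Thus $\Phi_j$ has vanishing Cauchy data on $\Gamma_R$, and unique continuation for the Helmholtz equation yields $\Phi_j\equiv0$; by orthonormality $\langle\hat\Phi(\cdot,s),\psi_{j,k}\rangle=0$ for all $j,k$, so $\hat\Phi(\cdot,s)=0$. As $s>0$ is arbitrary, injectivity of the Laplace transform gives $\Phi\equiv0$. Applied with $(\hat v_\lambda,\Phi)=(\hat u_{1,\lambda},F_1)$ this contradicts $F_1\not\equiv0$ and proves $D_1=D_2$; applied with $(\hat w_\lambda,F_1-F_2)$ it then gives $F_1=F_2$.

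The main obstacle I anticipate is justifying the residue manipulation at the level of the normal–derivative traces rather than term by term. I would not differentiate the series directly, but instead work with the resolvent map $z\mapsto(-\triangle+z)^{-1}\hat\Phi(\cdot,s)$ as a holomorphic $H^2(\Omega)$-valued map (so that $\partial_\nu(\cdot)|_{\Gamma_R}$ is bounded into $L^2(\Gamma_R)$ by the trace theorem, using $\partial D_1\in\mathcal C^3$), which makes the identity theorem and the residue computation rigorous. The remaining care is geometric: connectedness of $\Omega=B_R\setminus\overline{D_1}$ and of $U\setminus\overline{D_j}$, so that unique continuation from $\Gamma_R$ reaches all of $\Omega$, and the fact that $\mathrm{supp}(\hat\Phi(\cdot,s))\subset B_R\setminus\overline U\subset\Omega$ stays away from $\partial D_1$; both are guaranteed by \eqref{th3a} and the hypotheses of the statement.
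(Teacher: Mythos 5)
Your proposal is correct and follows essentially the same route as the paper: Laplace transform in time, unique continuation through $\mathcal O$ into $G$, the gap-domain contradiction for $D_1=D_2$, and then an eigenfunction expansion whose holomorphy in the medium parameter plus a residue computation and unique continuation for the resulting Helmholtz eigenfunctions forces $\hat F_1(\cdot,s)=0$, with injectivity of the Laplace transform in $s$ finishing the argument. The only (cosmetic) differences are that the paper enlarges the domain to $B_{R_1}\setminus\overline{D_1}$ so the overdetermination is vanishing of the field on the open annulus $B_{R_1}\setminus\overline{B_R}$ rather than vanishing Cauchy data on $\Gamma_R$, and it uses the $s$-dependent eigenbasis of $-s^{-2}\triangle$ with $\lambda$ as the holomorphic variable instead of your $s$-independent basis of $-\triangle$ with $z=\lambda s^2$.
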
}
%
%
\begin{proof}
By our assumption, the function $u_{j,\lambda}$ ($j=1,2$) satisfies
\begin{align}\label{1.3}
\begin{cases}
q_{\lambda}(x)\textcolor{rot}{\partial^2_t}u_{j,\lambda}(x,t) -  \triangle u_{j,\lambda}(x,t) = F_j(x,t), \quad &(x,t) \in \mathbb R^3\backslash  \bar{D}_j\times \mathbb R^+,\\
u_{j,\lambda}(x,0) = \partial_t u_{j,\lambda}(x,0) = 0 , \quad & x \in \mathbb R^3\backslash \bar{D}_j,\\
u_{j,\lambda}(x,t) = 0, \quad & (x,t) \in \partial D_j \times \mathbb R^+.
\end{cases}
\end{align}

We first prove $D_1=D_2$. If $D_1\neq D_2$,
suppose without loss of generality that
\rot{$D^* := (U\backslash G) \backslash \overline{D}_1 \neq \emptyset$ where $G$ denotes the connected component of $U\backslash (D_1\cup D_2)$ which can be connected to $|x|>R$. }
As done in the proof of Theorem \ref{th1}, one can prove that
\begin{equation}\label{th3c}\hat{u}_{1,\lambda}(x,s)=0,\quad x\in \mathbb R^{3}\setminus \overline{B_R},\ s>0,\ \lambda\in(a,b).\end{equation}
\tcb{ For $R_1>R$, we fix  $\tilde{\Omega}_1=B_{R_1}\backslash\overline{D_1}$ and $\Omega^*_1=B_{R_1}\backslash\overline{B_R}$. Then, in a similar way to Theorem \ref{th1} we can prove that, for all $s>0$, we have
\[ -\triangle\hat{u}_{1,\lambda}(x,s) + \lambda s^2\hat{u}_{1,\lambda}(x,s) = \hat{F}_1(x,s) \quad  x\in\tilde{\Omega}_1\backslash\Omega^*_1,\ \lambda \in (a,b),\]}

\tcb{and
\begin{align*}
\hat{u}_{1,\lambda}(x,s) = 0,\quad x\in\Omega^*_1,\ \lambda \in (a,b).
\end{align*}
 Therefore, we get
\begin{align}\label{eqq}
\begin{cases}
 -\triangle\hat{u}_{1,\lambda}(x,s) + \lambda s^2\hat{u}_{1,\lambda}(x,s) = \hat{F}_1(x,s) \quad & \text{in} \quad \tilde{\Omega}_1,\\
\hat{u}_{1,\lambda}(x,s) = 0,\quad x\in\Omega^*_1.
\end{cases}
\end{align}
From now on we fix $s>0$.}
\tcb{Denote by $\langle \cdot, \cdot\rangle$ the inner product in $L^2(\tilde{\Omega}_1)$, i.e.,
\[
\langle u, v\rangle := \int_{\tilde{\Omega}_1}u(x)\bar{v}(x){\rm d}x, \quad u, v\in L^2(\tilde{\Omega}_1).
\]
Denote by $\{\gamma_{l,s}, \phi_{l, k,s}(x)\}_{l\in \N^+, k\leq m_l}$ the eigenvalues and an associated orthonormal basis of  eigenfunctions of the operator $-s^{-2}\triangle$ over $\tilde{\Omega}_1$ with the Dirichlet boundary condition acting on $L^2(\tilde{\Omega}_1)$. Here the eigenvalues satisfy the relation $0<\gamma_{1,s}<\gamma_{2,s}<\cdots<\gamma_{l,s}<\cdots$ and $\{\phi_{l, k,s}\}_{k=1}^{m_l}$ denotes the eigenspace associated with $\gamma_{l,s}$.}
\tcb{In $\tilde{\Omega}_1$ we can represent the functions $s^{-2} \hat{F}_1(x,s)$} \tcb{and
 $\hat{u}_1(x,s)$ as
$$
\hat{u}_{1,\lambda}(s,x) = \sum_{ l \in \mathbb N^+}\frac{\sum_{k=1}^{m_l}\langle s^{-2}\hat{F}_1(\cdot,s), \phi_{ l, k,s}\rangle\phi_{ l, k,s}(x)}{\lambda + \gamma_{l,s}}, \quad \lambda\in(a,b).
$$
Following, the proof Theorem \ref{th1}, combining this representation with the fact that
$$\hat{u}_{1,\lambda}(x,s) = 0,\quad x\in\Omega^*_1,\ \lambda\in(a,b),$$
we deduce that $\hat{F}_1(\cdot,s)=0$. This last identity holds true for any $s>0$ and the injectivity of the Laplace transform implies that $F_1\equiv0$ which is a contradiction with the condition imposed on $F_1$.} Hence we have $D_1 = D_2$. In a similar manner we can prove $F_1(x,t) = F_2(x,t)$.
\end{proof}
\tcg{\begin{rema} The condition \eqref{th1a} can always be fulfilled  if
 $B_{R}\backslash(\text{supp}(F_1(\cdot, t)) \cup  \text{supp}(F_2(\cdot,t)) $ is  connected uniformly for all $t>0$.  Under the additional assumption that
  $B_{R}\backslash(\text{supp}(F_j(\cdot, t)) $ ($j=1,2$) are both connected, the domain $\Omega_1^*$ in \eqref{eqq} can be chosen to be a neighboring area of
 $ \text{supp}(\hat{F}_1(\cdot, s))$ uniformly in all $s>0$. Then the vanishing of $F_1$ simply follows by multiplying $e^{s\sqrt{\lambda}x}$ on both sides of the equation in (\ref{eqq}) and then using integration by parts over $\Omega_1^*$. Note that the Cauchy data of $\hat{u}_{1,\lambda}$ vanish on $\partial \Omega_1^*$ in this case.
   \end{rema}
}
\begin{rema}

Consider the time-harmonic acoustic wave equation with a wave-number-dependent source term modelled by
\begin{align}\label{cmp}
\triangle u_{\lambda} + \kappa^2q_{\lambda}(x)u_{\lambda} = f(x,\kappa), \quad x \in \mathbb R^3,
\end{align}
where $\text{supp}\,f(\cdot, k )\subset B_{R}$ for each $k>0$ and $q_{\lambda}(x)$ is specified by \eqref{q}. Further, we suppose that $u_\lambda(x)$ fulfills the Sommerfeld radiation condition
\ben
r(\partial_r u_\lambda-ik u_\lambda)\rightarrow 0, \quad \mbox{as}\quad r=|x|\rightarrow\infty,
\enn
uniformly in all $\hat{x}=x/|x|$.
The proof of Theorem \ref{th3} implies that 
the data $\{u_{\lambda}(x,\kappa): x\in \Gamma_R, ~\lambda \in (a,b), \kappa\in(\kappa_{min}, \kappa_{max})\}$ uniquely determine $f(x,\kappa)$ for all $x\in B_{R}$ and $\kappa>0$. Here,
$0<\kappa_{min}<\kappa_{max}$.
\end{rema}



\section{Determination of other time-dependent source terms}\label{ip2}
This section is devoted to the unique determination of other two time-dependent source terms. For simplicity we suppose that the background medium is homogeneous and isotropic without embedded obstacles. In particular, we are interested in the \tcr{inverse problem of detecting of the track} of a moving point source.
\subsection{Moving point sources}\label{sub3.1}
Consider the acoustic wave propagation incited by a moving point source in a homogeneous medium modelled by
\begin{align}\label{eqn5}
\begin{cases}
\textcolor{rot}{\partial^2_t}u(x,t) - \triangle u(x,t) = \delta(x-a(t))\cos(\omega t), \quad &(x,t)\in \mathbb R^3\times \mathbb R^+\backslash\{(a(t),t): t\in \R^+\},\\
u(x,0) = \partial_t u(x,0) = 0, \quad & x\in \mathbb R^3,
\end{cases}
\end{align}
In \eqref{eqn5}, \tcr{the symbol} $\delta$ is the Dirac delta distribution in space, the function $a(t): [0, +\infty)\rightarrow \mathbb R^3$
models the orbit function of a moving source starting from the origin and $\cos(\omega t)$ is a cosine signal emitting from the moving source where $\omega>0$ denotes the frequency. Note that in this subsection the temporal function is not compactly supported in $\R^+$, differing from the other inverse problems of this paper. Physically, this means that the moving source radiates periodic signals continuously. It should be remarked that the relation between the orbit $a(t)$ and the signal $u(x, t)$ is non-linear and that the forward model cannot be understood in the time-harmonic sense.
We state
our inverse moving source problem as follows.

\textbf{Inverse Problem}: Determine the orbit function $\{a(t): t\in(0,T_0)\}$ from the radiated wave field $u$ detected at a finite number of receivers lying on the surface $\Gamma_R$ over the finite time period $(0,T)$ for some sufficiently large $\tcr{T>T_0>0}$.

In the following uniqueness result we assume that $|a(t)|<R_1$ for some $0<R_1<R$ and all $t>0$, that is, the moving source does not enter into the exterior of $B_{R_1}$.

\begin{theo}\label{th6}
Assume $a(t)\in \mathcal{C}^2(0, +\infty)$, $|a^{\prime}(t)|<1$ and $a(0)=O$.
Let $x^{(j)}\in\Gamma_R$ ($j=1,\cdots, 4$) be four
receivers which do not lie on one plane. Then the orbit function over a finite interval of time $\{a(t): t\in (0,T_0)\}$ can be uniquely determined by the data $\{u(x^{(j)}, t)\}: j=1, \cdot\cdot\cdot, 4, t\in(0,T)$ for some $T>R+R_1+T_0$.
\end{theo}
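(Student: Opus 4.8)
The plan is to reduce the moving-source problem to a family of one-dimensional inverse problems, one per receiver, by writing the field through the three-dimensional retarded potential and then exploiting the explicit Li\'enard--Wiechert structure of the signal.

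First I would represent the solution of \eqref{eqn5} by the free-space retarded Green's function $\frac{\delta(t-|x|)}{4\pi|x|}$ of $\partial_t^2-\triangle$. Carrying out the spatial and temporal $\delta$-integrations gives
\[
u(x,t)=\frac{1}{4\pi}\int_0^\infty \frac{\cos(\omega\tau)}{|x-a(\tau)|}\,\delta\big(t-\phi_x(\tau)\big)\,d\tau,\qquad \phi_x(\tau):=\tau+|x-a(\tau)|.
\]
The hypothesis $|a'(t)|<1$ yields $\phi_x'(\tau)=1-\frac{(x-a(\tau))\cdot a'(\tau)}{|x-a(\tau)|}\ge 1-|a'(\tau)|>0$, so $\phi_x$ is a strictly increasing $\mathcal C^2$ bijection of $[0,\infty)$ onto $[|x|,\infty)$; hence for $t>|x|$ there is a unique retarded time $\tau=\tau_x(t)$ and the integral collapses to a single term. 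Using $|x-a(\tau_x(t))|=t-\tau_x(t)$ together with $\phi_x'(\tau_x(t))=1/\tau_x'(t)$, the amplitude and the phase become locked to each other and
\[
u(x,t)=\frac{\tau_x'(t)\,\cos(\omega\tau_x(t))}{4\pi\,(t-\tau_x(t))}.
\]

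The point of this identity is that, for each receiver $x=x^{(j)}$, it is a first-order scalar ODE for the retarded-time map $\tau_{x^{(j)}}$ whose right-hand side is determined by the measured data $u(x^{(j)},\cdot)$, supplemented by the initial condition $\tau_{x^{(j)}}(R)=0$ coming from $a(0)=O$ and $|x^{(j)}|=R$. Solving it recovers $\tau_{x^{(j)}}(t)$, hence the distance $|x^{(j)}-a(\tau)|=t-\tau_{x^{(j)}}(t)$, for every retarded time $\tau=\tau_{x^{(j)}}(t)$ in $(0,T_0)$; the choice $T>R+R_1+T_0$ guarantees that the whole range $\{\phi_{x^{(j)}}(\tau):\tau\in(0,T_0)\}$ lies inside the observation window, since $\phi_{x^{(j)}}(T_0)\le T_0+R+R_1$. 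Finally, for each fixed $\tau\in(0,T_0)$ the four numbers $|x^{(j)}-a(\tau)|$, $j=1,\dots,4$, locate $a(\tau)$ by trilateration: subtracting the squared-distance equations pairwise gives a linear system whose matrix has rows $x^{(j)}-x^{(1)}$, and this matrix is invertible exactly because the four receivers are not coplanar. This pins down $a(\tau)$ uniquely for all $\tau\in(0,T_0)$.

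The main obstacle is the analysis of the ODE across the zeros of $\cos(\omega\tau)$: at the discrete retarded times $\tau_k=(2k+1)\pi/(2\omega)$ both $\cos(\omega\tau_x(t))$ and the data $u(x^{(j)},t)$ vanish, so the equation degenerates and its right-hand side fails to be Lipschitz. I would handle this by reading off these instants directly as the ordered zeros of the measured signal: since the amplitude factor is strictly positive, the zeros of $u(x^{(j)},\cdot)$ on $(R,T)$ are exactly the images $\phi_{x^{(j)}}(\tau_k)$, which serve as known checkpoints where $\tau_{x^{(j)}}=\tau_k$. On each open interval between consecutive checkpoints the right-hand side is smooth, because $\cos(\omega\tau)$ stays bounded away from $0$, so standard ODE uniqueness applies, and continuity together with the checkpoint values glues the pieces into a single monotone solution on $(R,T)$. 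For the pure uniqueness statement one may bypass solvability by rewriting the identity as $\frac{d}{dt}\sin(\omega\tau_x(t))=4\pi\omega\,(t-\tau_x(t))\,u(x^{(j)},t)$ and subtracting it for two candidate orbits, which leads to a Gronwall-type estimate forcing the two retarded-time maps to coincide. Some care with the distributional computation of the retarded potential for a moving singularity, and with the regularity $\tau_x\in\mathcal C^2$ inherited from $a\in\mathcal C^2$, completes the argument.
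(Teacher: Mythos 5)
Your proposal is correct and follows essentially the same route as the paper: the retarded-potential representation, the strict monotonicity of the travel-time map $\tau\mapsto\tau+|x-a(\tau)|$ forced by $|a'|<1$, the resulting pointwise identity locking $u(x,t)$ to the retarded time, a scalar first-order ODE per receiver with initial value fixed by $a(0)=O$, and finally trilateration from four non-coplanar receivers (your ODE is written for the retarded-time map $\tau_x=f^{-1}$ rather than for the distance $g(t)=|x-a(t)|=f(t)-t$ as in the paper, but these are equivalent). Your explicit treatment of the degeneracy at the zeros of $\cos(\omega\tau)$ --- reading them off as checkpoints from the zeros of the data and passing to the integrated form $\frac{d}{dt}\sin(\omega\tau_x(t))=4\pi\omega\,(t-\tau_x(t))\,u(x,t)$ --- is in fact more careful than the paper's restriction to the set $\mathcal{D}^*$ where $S_x$ stays bounded.
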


\begin{proof}
Our proof relies on the distance function \textcolor{rot}{$t\mapsto|x-a(t)|$} between the receiver $x\in \Gamma_R$ and the source point $a(t)$ characterized by an ordinary differential equation with respect to $t>0$.

Firstly, we express the solution $u$ to the acoustic wave equation \eqref{eqn5} in terms of the Green's function as
\begin{align}\label{u}
u(x,t) &= \int_0^{\infty}\int_{\mathbb R^3} \frac{\delta(t-s-|x-y|)}{4\pi|x-y|}\delta(y-a(s))\cos(\omega s){\rm d}y{\rm d}s\notag\\
&= \int_0^{\infty} \frac{\delta(t-s-|x-a(s)|)}{4\pi|x-a(s)|}\cos(\omega s){\rm d}s.
\end{align}
Define $f(t) := t + |x-a(t)|\in \mathcal{C}^2(0, +\infty)$ for some fixed receiver $x\in\Gamma_R$.
Since $|a^{\prime}(t)|<1$, it is easy to see
\ben
f^{\prime}(t)=1+|x-a(t)|'=1-\frac{(x-a(t))\cdot a'(t)}{|x-a(t)|}>0,\quad t>0.
\enn
Note that $|x-a(t)|\neq 0$ for all $t>0$, due to the assumption $|a(t)|<R_1<|x|=R$. Hence, $f(t)> f(0)\tcr{=|x|}=R$ for all $t>0$.
From \eqref{u} we obtain
\begin{align}\label{uu}
u(x,f(t)) = \int_0^{\infty} \frac{\delta(f(t)-f(s))}{4\pi|x-a(s)|}\cos(\omega s){\rm d}s.
\end{align}
Change the variable by setting $\tau = f(s)$ in \eqref{uu}. Since $f$ is monotonically increasing in $\R^+$, its inverse $f^{-1}$ exists.
Consequently, we obtain
\be\nonumber
u(x,f(t)) &=& \int_{\tcr{R}}^\infty\left\{\frac{\delta(f(t)-\tau)}
{4\pi|x-a(s)|}\frac{\cos(\omega s)}{f'(s)}\Big|_{s=f^{-1}(\tau)}\right\}\;{\rm d}\tau\\ \nonumber
&=&\frac{1}
{4\pi|x-a(s)|}\frac{\cos(\omega s)}{f'(s)}\Big|_{s=t}\\ \label{ru}
&=&
\frac{\cos(\omega t)}{4\pi|x-a(t)|}\frac{1}{1+|x-a(t)|^{\prime}}.
\en
\tcr{Here we have used once again the fact that $f(t)>R$ for $t>0$.}
Denote the distance function between the receiver $x$ and the source position at the time point $t$ by $g(t) := |x-a(t)|=f(t)-t\in \mathcal{C}^2(0, +\infty)$. It follows from \eqref{ru} that $g(t)$ fulfills the ordinary differential equation
\be\label{ode}
g'(t) = \frac{S_x(t, g(t))}{4\pi\,g(t)} - 1, \quad t\in(0,T_0],\qquad
g(0) = R,
\en
where the function
\be\label{S}
S_x(t, g(t)):=\frac{\cos(\omega t)}{u(x,t+g(t))},\quad t>0
\en
is uniquely determined by the wave field measured at the receiver $x\in\Gamma_R$. The equation \eqref{ode}
characterizes a relation between
 the radial speed  of the moving source at $t>0$ and the causal signal $u(x, t+g(t))$. \tcr{Note that we have the upper bound $t+g(t)<T_0+R+R_0$ and
 by (\ref{ru}), $|S_x(t, g(t))|<8\pi (R+R_0)$ and for all $t\in(0,T_0)$}.

To investigate the well-posedness of (\ref{ode}), we introduce the function
\ben
F(t,\tau)=\frac{S_x(t, \tau)}{4\pi\,\tau} - 1,\qquad (t,\tau)\in \tcr{\mathcal{D}:=\{[0,T_0]\times [R-R_1, R+R_1]\}}.
\enn
Combining (\ref{S}) and (\ref{ru}), we have
\ben
S_x(t,\tau)=\frac{\cos(\omega t)}{u(x,t+\tau)},
\enn
and
\ben
u(x,t)=\frac{\cos(\omega b(t))}{4\pi (t-b(t))}\frac{1}{f'(b(t))},\quad t>R,\quad b(t):=f^{-1}(t).
\enn
Note that $t\neq b(t)$ and $f'(t)\neq 0$ for all $t>0$. \tcr{This implies the expression
\ben
S_x(t,\tau)=4\pi\frac{\cos(\omega t)}{\cos(\omega b(t+\tau))} \, (t+\tau-b(t+\tau))\, f'(b(t+\tau)).
\enn
Here we restrict the variables $(t,\tau)$ to a subset of $\mathcal{D}$:
\ben
(t,x)\in \mathcal{D}^*:=\mathcal{D}\cap \{(t,\tau): t+\tau>R, \quad |S_x(t,\tau)|<8\pi (R+R_0)\}.
\enn}
Since the orbit function $a(t)$ is of $\mathcal{C}^2$-smooth, the function $b(t)\in \mathcal{C}^2$ and $f'\in \mathcal{C}^1$. This implies that the function $(t,\tau)\rightarrow S_x(t,\tau)$ is $\mathcal{C}^1$-smooth on \tcr{$\overline{\mathcal{D}^*}$}. Further, one can prove that
\ben
\frac{{\rm d}\,F(t,\tau)}{{\rm d}\,\tau}=\frac{1}{4\pi}\left\{
-\frac{S_x(t,\tau)}{\tau^2}+\frac{1}{\tau}\frac{{\rm d}\,S_x(t,\tau)}{{\rm d}\,\tau}
\right\}\leq L \quad \tcr{\mbox{for all}\quad (t,\tau)\in\overline{\mathcal{D}^*}}.
\enn
Hence, the dynamical system \eqref{ode} admits a unique solution \tcr{in $\mathcal{D}^*$}. This implies that the distance function $|x-a(t)|$ for $0<t<T_0$ can be uniquely determined by $u(x,t)$ for $t\in(0,T)$ where $T=T_0+R+R_1$. Hence, the orbit function  $\{a(t): t\in(0,T_0)\}$ is uniquely determined by the wave fields $\{u(x^{(j)},t): j=1,2,3,4, t\in(0,T)\}$ detected at four receivers $x^{(j)}$ which do not lie on a plane. \end{proof}

\begin{rema}
The proof of Theorem \ref{th6} implies that for each $t_0>0$, we can get the distance $|a(t_0)-x^{(j)}|=g_j(t_0)$, where $g_j(t)$ solves
the equation (\ref{ode}) with $x=x^{(j)}\in \Gamma_R$, $j=1,2,3,4$.
This automatically gives an inversion scheme for calculating $a(t)$.
\end{rema}

\subsection{Source terms independent of one spatial variable}\label{sub3.2}

In this subsection we consider an inhomogeneous source term which does not depend on one spatial variable. Without loss of generality we suppose that $F(x,t)=\tilde{f}(\tilde{x},t)h(x_3)$, where
the function $\tilde{f}$ is compactly supported in $\tilde{B}_{R_0} \times [0, T_0]$ and $h$ is supported in $(-R_0, R_0)$ for some $R_0<R/\sqrt{2}$.
Here $\tilde{x}:=(x_1, x_2)$ and $\tilde{B}_{R_0}:=\{\tilde{x}\in \mathbb R^2| ~ |\tilde{x}|<R_0\}$. Our aim is to recover $\tilde{f}$, assuming that $h\in L^1(\textcolor{rot}{\mathbb R})$ is known in advance.
In particular, $\tilde{f}(\tilde{x},t)$ can be a moving source with the orbit lying on the $ox_1x_2$-plane and $h(x_3)$ can be regarded as a function approximating the delta distribution $\delta(x_3)$. Now, we consider the wave equation
\begin{align}\label{eqn4}
\begin{cases}
\textcolor{rot}{\partial^2_t}u(x,t) - \triangle u(x,t) = \tilde{f}(\tilde{x},t)h(x_3) \quad &\text{in} ~ \mathbb R^3 \times \mathbb R^+,\\
u(x,0) = \partial_t u(x,0) = 0 \quad &\text{in}~\mathbb R^3.
\end{cases}
\end{align}
\textcolor{rot}{Throughout this subsection, the symbol $\,\widehat{\cdot}\,$ will denote the Fourier transform with respect to the time variable $t$.}
\begin{theo}\label{th4}
Assume that $h\neq 0$ is given. Then $\tilde{f}(\tilde{x},t)$ can be uniquely determined by $\{u(x,t): x\in \Gamma_R, t\in (0,T)\}$, where $T_1 = T_0 + R + R_0$.
\end{theo}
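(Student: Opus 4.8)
The plan is to reduce the hyperbolic problem to a one--parameter family of Helmholtz equations via the temporal Fourier transform, and then to exploit the product structure $\tilde f(\tilde x,t)\,h(x_3)$ through a complexified plane--wave test function. A preliminary point is that the finite--time data already carry the full temporal information: since the medium is homogeneous and we are in three space dimensions, the strong Huygens principle applies. Writing $u$ through the retarded potential (as in \eqref{u}), the contribution of a source supported in $\{|\tilde y|<R_0,\ |y_3|<R_0\}\times[0,T_0]$ to a receiver $x\in\Gamma_R$ is nonzero only when $t=s+|x-y|$; hence $u(x,\cdot)$ is supported in a bounded time interval contained in $(0,T)$. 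Together with causality this shows that the data on $(0,T)$ determine $u(x,\cdot)$ for all $t$, and therefore the temporal Fourier transform $\hat u(x,\omega)$ for every $\omega\in\mathbb R$ and $x\in\Gamma_R$.

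I would phrase the argument as a uniqueness statement. Suppose two admissible sources $\tilde f_1,\tilde f_2$ produce the same boundary data, and set $u=u_1-u_2$, $\tilde f=\tilde f_1-\tilde f_2$. By the preceding step $u=0$ on $\Gamma_R$ for all $t$. Because $R_0<R/\sqrt2$, the source support sits strictly inside $B_R$, so in the exterior $\{|x|>R\}$ the difference $u$ solves the homogeneous wave equation with vanishing initial and Dirichlet data; a standard energy estimate then gives $u\equiv0$ for $|x|\ge R$, whence $\partial_\nu u=0$ on $\Gamma_R$ as well. Taking the temporal Fourier transform of \eqref{eqn4} turns the equation into the Helmholtz equation $\Delta\hat u+\omega^2\hat u=-\hat{\tilde f}(\tilde x,\omega)\,h(x_3)$ in $\mathbb R^3$, and we now know both Cauchy data $\hat u=\partial_\nu\hat u=0$ on $\Gamma_R$.

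The crux is the next step. I would test the Helmholtz equation against the complex plane waves $v(x)=e^{ik\cdot x}$, $k=(k_1,k_2,k_3)\in\mathbb C^3$ with $k\cdot k=\omega^2$, so that $\Delta v+\omega^2 v=0$. Green's second identity over $B_R$, combined with the vanishing Cauchy data, yields $\int_{B_R}v\,\hat{\tilde f}(\tilde x,\omega)\,h(x_3)\,dx=0$; since the supports lie in $B_R$ this factorizes as $\big(\int_{\mathbb R^2}e^{i(k_1x_1+k_2x_2)}\hat{\tilde f}(\tilde x,\omega)\,d\tilde x\big)\,\hat h(k_3)=0$, where $\hat h$ denotes the one--dimensional transform of the known profile $h$. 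The decisive observation is that at a fixed $\omega$ the single constraint $k\cdot k=\omega^2$ still allows $(k_1,k_2)$ to range over all of $\mathbb R^2$: for any $\xi=(k_1,k_2)$ one simply sets $k_3=\sqrt{\omega^2-|\xi|^2}$, which is real or purely imaginary. As $h\in L^1$ is compactly supported and nonzero, $\hat h$ is entire and vanishes only on a discrete set, so one may divide by $\hat h(k_3)$ for all but discretely many $\xi$ and conclude that the two--dimensional spatial transform of $\hat{\tilde f}(\cdot,\omega)$ vanishes. This transform is itself entire (as $\hat{\tilde f}(\cdot,\omega)$ is compactly supported in $\tilde x$), hence it vanishes identically, giving $\hat{\tilde f}(\cdot,\omega)\equiv0$; letting $\omega$ run over $\mathbb R$ and inverting the temporal transform yields $\tilde f\equiv0$.

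The main obstacle, and precisely the point where this argument streamlines the elasticity proof of \cite{HK}, is recovering the full two--dimensional spatial spectrum of $\tilde f$ from the single relation $k\cdot k=\omega^2$. The resolution is to complexify $k_3$, so that at one temporal frequency every spatial frequency $\xi\in\mathbb R^2$ becomes reachable; this works exactly because the known profile $h$ is compactly supported, making $\hat h$ entire and generically nonzero. The remaining items, namely the precise value of $T_1$ coming from the geometry, the fact that the exceptional set of $\xi$ is negligible, and the energy estimate for the exterior problem, are routine.
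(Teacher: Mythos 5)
Your argument is correct and follows essentially the same route as the paper: strong Huygens' principle to justify the temporal Fourier transform from finite-time data, reduction to a Helmholtz equation with vanishing Cauchy data on $\Gamma_R$, exponential test functions adapted to the product structure $\tilde f(\tilde x,t)h(x_3)$, and analyticity of Fourier transforms of compactly supported functions. The only cosmetic difference is at the end: the paper keeps the $x_3$-exponent real (taking $\kappa_1>\kappa$) and invokes analyticity of the three-dimensional space--time transform $\mathcal F(f)$ on the open cone $\{\kappa_1>\kappa\}$, whereas you complexify $k_3$ to reach every $\xi\in\mathbb R^2$ at a fixed $\omega$ and divide by the entire function $\hat h$ --- whose zero set pulls back to a null set of circles in the $\xi$-plane rather than a discrete set, a harmless imprecision since the $\tilde x$-transform of $\hat{\tilde f}(\cdot,\omega)$ is continuous.
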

\begin{proof}
It suffices to prove that $\tilde{f}(\tilde{x},t) = 0$ if $u(x,t)=0$ for $x\in\Gamma_R$ and $t\in(0,T)$.
By the strong Huygens' principle, it holds that $u(x,t) = 0$ for \tcr{$|x|<R$} and $t> T$ (see \cite{HLLZ}). Then, applying the Fourier transform in time to $u$ in \eqref{eqn4} yields
\begin{align}\label{eqn4.1}
\begin{cases}
\triangle \hat{u}(x,\kappa) + \kappa^2 \hat{u}(x,\kappa) = \hat{\tilde{f}}(\tilde{x},\kappa)h(x_3) \quad &\text{in} ~ B_R,\\
\hat{u}(x,\kappa) = \partial_{\nu}\hat{u}(x,\kappa) = 0 \quad &\text{on}~\Gamma_R,
\end{cases}
\end{align}
where the Fourier transform of $u(x,t)$, given by
\[
 \hat{u}(x,\kappa) = \int_{B_R}u(x,t)\textcolor{rot}{{\rm e}}^{{\rm -i}\kappa t}{\rm d}t, \quad \kappa >0,
\]
satisfies the Sommerfeld radiation condition for any $\kappa>0$ (see \cite{HLLZ}). Here $\hat{\tilde{f}}(\tilde{x},\kappa)$ denotes the Fourier transform of $\tilde{f}(x,t)$. Define the test functions
\[
\varphi(x;\kappa_1) := \textcolor{rot}{{\rm e}}^{{\rm i}\kappa_1\tilde{x}\cdot \tilde{d}}\textcolor{rot}{{\rm e}}^{\sqrt{\kappa_1^2 - \kappa^2}x_3},\quad \tilde{d}\in \R^2,\quad |\tilde{d}|=1,\quad \kappa_1>\kappa.
\]
Then it is easy to verify that $\varphi$ satisfies the Helmholtz equation
\[
\triangle \varphi + \kappa^2 \varphi =0\quad\mbox{in}\quad \R^3.
\]
Multiplying both sides of \eqref{eqn4.1} by $\varphi$ and using integration by parts over $B_R$ yield
\[
\int_{B_R}\hat{\tilde{f}}(\tilde{x},\kappa)h(x_3)
\varphi(x){\rm d}x = \left(\int_{\tilde{B}_R}\hat{\tilde{f}}(\tilde{x},\kappa)\textcolor{rot}{{\rm e}}^{{\rm i}\kappa_1\tilde{x}\cdot \tilde{d}}{\rm d}\tilde{x}\right)
\left(\int_{-R}^R h(x_3)\textcolor{rot}{{\rm e}}^{\sqrt{\kappa_1^2 - \kappa^2}x_3}{\rm d}x_3\right) = 0.
\]
Since $h$ does not vanish identically, for $\kappa>0$ we can always find an interval $I$ such that  $\int_{-R}^R h(x_3)\textcolor{rot}{{\rm e}}^{\sqrt{\kappa_1^2 - \kappa^2}x_3}{\rm d}x_3 \neq 0$ for all $\kappa_1\in I$ and $\kappa_1>\kappa$, implying that \begin{align}\label{ft}
\int_{\tilde{B}_R}\hat{\tilde{f}}(\tilde{x},\kappa)\textcolor{rot}{{\rm e}}^{{\rm i}\kappa_1\tilde{x}\cdot d}{\rm d}\tilde{x} = 0
\end{align} for such $\kappa_1$.
Given $f(\tilde{x},t)$, denote by $\mathcal{F}(f)(\xi)$ \tcr{($\xi\in\R^3$)} the Fourier transform of $f$ with respect to the variable $(\tilde{x},t)\in \mathbb R^3$, i.e.,
\[
\mathcal{F}(f)(\xi) = \int_{\mathbb R^3}f(\tilde{x},t)\textcolor{rot}{{\rm e}}^{-{\rm i}\xi \cdot (\tilde{x},t)}{\rm d}\tilde{x}{\rm d}t,\quad \xi\in\R^3.
\]
Then the relation \eqref{ft} gives that
\[
\mathcal{F}(f)(\kappa_1\tilde{d},\kappa) = 0
\]
for all $\kappa_1 > \kappa >0$ and $|\tilde{d}|=1$. Since $\mathcal{F}(f)$ is analytic in $\mathbb R^3$ and $\{(\kappa_1\tilde{x},\kappa)|\kappa_1>\kappa, ~ |\tilde{d}|=0\}$ is an open set in $\mathbb R^3$, we have $\mathcal{F}(f)(\xi) = 0$ for all $\xi \in \mathbb R^3$, leading to $f(x,t)=0$. The proof is complete.
\end{proof}

\begin{rema}
Let $\tilde{f}(x,t) = \tilde{f}(\tilde{x}- \tilde{a}(t))$ be a moving source with the orbit $\tilde{a}(t):[0, +\infty)\rightarrow \tcr{\tilde{B}_R}$ lying on the $ox_1x_2$-plane. The proof of Theorem \ref{th4} implies the unique determination of the orbit $\tilde{a}(t)$. We refer to \cite{HKLZ} for more discussions concerning  inverse moving source problems in
electromagnetism.
\end{rema}
Based on the uniqueness proof of Theorem \ref{th4},
one can obtain a log-type stability estimate under strong a priori assumptions of $\tilde{f}$ and $h$. \tcr{The proof for the more complicated elastodynamical system was carried out in \cite{HK}. Below we only formulate the stability result and omit the proof for simplicity.}

\begin{theo}\label{th5}
Let $R>\sqrt{2}R_0, T_1=T_0+R+R_0$ and suppose $\tilde{f}\in H^3(\mathbb R^2\times R^+)\cap H^4(0,T; L^2(\mathbb R^2))$ satisfies
\[
\tilde{f}(\tilde{x},0) = \partial_t \tilde{f}(\tilde{x},0)  = \partial^2_t \tilde{f}(\tilde{x},0) = \partial^3_t \tilde{f}(\tilde{x},0) = 0, \quad \tilde{x}\in \mathbb R^2.
\]
Assume also that $h$ is non-uniformly vanishing with a constant sign $(h\geq 0~ \text{or}~ h\leq 0)$ and that there exists $M>0$ such that
\[
\|\tilde{f}\|_{H^3(\mathbb R^2 \times \mathbb R)} + \|\tilde{f}\|_{H^4(0,T;L^2(\mathbb R^2))}\leq M.
\]
Then, there exists $C>0$ depending on $M, R, T, \|h\|_{L^1(\mathbb R)}$ such that
\begin{equation*}
\|\tilde{f}\|_{L^2((0,T)\times\tilde{B}_{R})}\leq  C\left(\|u\|_{H^3(0,T;H^{3/2}(\partial B_{R}))}+\left|\ln\left(\|u\|_{H^3(0,T;H^{3/2}(\partial B_{R}))}\right)\right|^{-1}\right).\end{equation*}
\end{theo}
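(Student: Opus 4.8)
The plan is to upgrade the uniqueness argument of Theorem \ref{th4} into a quantitative estimate by retaining the boundary contributions that were discarded there. First I would return to the Helmholtz reformulation \eqref{eqn4.1}, now with $u$ merely small on $\Gamma_R$ rather than vanishing, multiply by the harmonic test function $\varphi(x;\kappa_1)=e^{i\kappa_1\tilde x\cdot\tilde d}e^{\sqrt{\kappa_1^2-\kappa^2}\,x_3}$, and integrate by parts over $B_R$. This produces the identity
\[
\left(\int_{\tilde B_R}\hat{\tilde f}(\tilde x,\kappa)e^{i\kappa_1\tilde x\cdot\tilde d}\,d\tilde x\right)H(\kappa_1,\kappa)=\mathcal B(\kappa,\kappa_1,\tilde d),
\]
where $H(\kappa_1,\kappa)=\int_{-R}^R h(x_3)e^{\sqrt{\kappa_1^2-\kappa^2}x_3}\,dx_3$ and $\mathcal B$ is a boundary integral of the Cauchy data $\hat u,\partial_\nu\hat u$ on $\Gamma_R$ against $\varphi,\partial_\nu\varphi$. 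In the notation of the uniqueness proof this reads $\mathcal F(f)(\kappa_1\tilde d,\kappa)\,H(\kappa_1,\kappa)=\mathcal B(\kappa,\kappa_1,\tilde d)$, i.e.\ the data now control $\mathcal F(f)$ on a cone up to a small, frequency-amplified error.

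Next I would exploit the constant-sign hypothesis on $h$: if $h\geq0$ and $h\not\equiv0$, then restricting $\kappa_1$ to $(\kappa,2\kappa)$ keeps $\sqrt{\kappa_1^2-\kappa^2}$ bounded and gives $H(\kappa_1,\kappa)\geq c\,\|h\|_{L^1(\R)}>0$ uniformly, so the factor $H$ may be divided out. This yields $|\mathcal F(f)(\xi)|\leq C|\mathcal B|$ on the truncated cone $\mathcal C_\rho=\{(\kappa_1\tilde d,\kappa):\kappa<\kappa_1<2\kappa,\ 0<\kappa\leq\rho,\ |\tilde d|=1\}$. The boundary term $\mathcal B$ is then estimated in terms of the data: the growth of $\varphi$ in $x_3$ is at most $e^{\sqrt{\kappa_1^2-\kappa^2}R}\leq e^{C\rho}$ on $\mathcal C_\rho$, while the temporal regularity $u\in H^3(0,T;H^{3/2}(\partial B_R))$ together with the vanishing conditions $\tilde f(\cdot,0)=\partial_t\tilde f(\cdot,0)=\partial_t^2\tilde f(\cdot,0)=\partial_t^3\tilde f(\cdot,0)=0$ (which allow integration by parts in $t$ without boundary terms) and the strong Huygens principle (so that $u=0$ in $B_R$ for $t>T_1$, making the time-Fourier transform well controlled) let me bound $\hat u(\cdot,\kappa)$ and $\partial_\nu\hat u(\cdot,\kappa)$ on $\Gamma_R$ by $\|u\|_{H^3(0,T;H^{3/2}(\partial B_R))}=:\varepsilon$. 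Altogether $|\mathcal F(f)(\xi)|\leq Ce^{C\rho}\varepsilon$ on $\mathcal C_\rho$.

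From here the argument follows the increasing-stability template. Because $f=\tilde f$ is compactly supported in $(\tilde x,t)$, its Fourier transform $\mathcal F(f)$ is entire of exponential type (Paley--Wiener) and, for real $\xi$, globally bounded by $CM$. The crucial step is to propagate the cone bound $Ce^{C\rho}\varepsilon$ to an estimate on the whole ball $\{|\xi|\leq\rho\}$ by a quantitative analytic-continuation (three-region / harmonic-measure) argument, giving $|\mathcal F(f)(\xi)|\leq C\,M^{1-\theta}(e^{C\rho}\varepsilon)^{\theta}$ there, with an exponent $\theta=\theta(\rho)$ that degenerates as $\rho\to\infty$. This is the main obstacle, and it is precisely the severe ill-posedness of analytic continuation from a subregion that forces a logarithmic rather than H\"older modulus of continuity. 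Finally I would split, via Plancherel, $\|f\|_{L^2}^2=c\int_{|\xi|\leq\rho}|\mathcal F(f)|^2\,d\xi+c\int_{|\xi|>\rho}|\mathcal F(f)|^2\,d\xi$, bounding the low frequencies by the propagated estimate and the high-frequency tail by $CM^2\rho^{-6}$ through the weighted bound $\int(1+|\xi|)^6|\mathcal F(f)|^2\,d\xi\leq C\|\tilde f\|_{H^3}^2$. Optimising in $\rho$ — balancing the exponentially amplified data term against the polynomially decaying tail forces $\rho\sim|\ln\varepsilon|$ — yields $\|\tilde f\|_{L^2((0,T)\times\tilde B_R)}\leq C(\varepsilon+|\ln\varepsilon|^{-1})$, the claimed estimate. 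Throughout, the analogous but heavier computation for the elastic system in \cite{HK} serves as the template, the condition $R>\sqrt2\,R_0$ guaranteeing the geometric separation needed for the cone identity.
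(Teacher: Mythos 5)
Your proposal follows exactly the route the paper intends: the paper omits the proof of Theorem \ref{th5}, saying only that it is based on the uniqueness argument of Theorem \ref{th4} together with the elastodynamic computation of \cite{HK}, and that computation is precisely the quantitative Fourier-cone identity, harmonic-measure analytic continuation, and high/low frequency splitting with $\rho\sim|\ln\varepsilon|$ that you describe. Two small points to tighten: the boundary term $\mathcal B$ contains $\partial_\nu\hat u$ on $\Gamma_R$, which is not controlled by the stated data norm $\|u\|_{H^3(0,T;H^{3/2}(\partial B_R))}$ and must be recovered from the Dirichlet trace via the exterior Dirichlet-to-Neumann map for the Helmholtz equation (with polynomially growing bounds in $\kappa$); and your lower bound $H(\kappa_1,\kappa)\geq c\,\|h\|_{L^1(\R)}$ is not uniform, since for $h\geq 0$ supported in $(-R_0,R_0)$ one only gets $H\geq e^{-\sqrt{3}\,\rho R_0}\|h\|_{L^1(\R)}$ on the truncated cone --- still harmless, as it is absorbed into the $e^{C\rho}$ amplification factor before optimizing in $\rho$.
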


\section{Concluding remarks}\label{conclude}
This paper is mainly concerned with a Fourier-Laplace approach to inverse acoustic source problems using boundary dynamical  data over an infinite time interval. \textcolor{rot}{In situations where the Huygens' principle does not hold (e.g., the inhomogeneous background medium considered in Section \ref{ip1}), we apply the Laplace transform in place of the Fourier transform. The Fourier transform was used in the proof of Theorem \ref{th4}.}
It is worthwhile to investigate the uniqueness of recovering obstacles and source terms simultaneously using the data over a finite time interval \tcr{without any other assumptions on the source term at $t=0$}. This seems to be more realistic, but our approach of applying the Laplace transform cannot be applied.  The increasing stability issue for time-domain inverse source problems with respect to exciting frequencies would be interesting. However,  existing results are all justified in the time-harmonic regime only. The stability results in the time-domain  will provide deep insights into the resolution analysis of inverse scattering problems modeled by hyperbolic equations. Finally,  radiating and non-radiating time-dependent sources deserve to be rigorously characterized and classified. We hope to be able to address these issues and report the progress in the future.

\section*{Acknowledgement}
The work of G. Hu is supported by the NSFC grant (No. 11671028) and NSAF grant (No. U1530401). \tcb{The work of Y. Kian is supported by  the French National Research Agency ANR (project MultiOnde) grant ANR-17-CE40-0029.} The authors would like to thank Gen Nakamura for pointing the paper \cite{Isakov08} and for helpful discussions.

\end{document}